\newtheorem{thm}{Theorem}[section]
\newtheorem{cor}{Corollary}[section]
\newtheorem{prop}{Proposition}[section]
\theoremstyle{definition}
\newtheorem{rem}{Remark}[section]
\newcommand{\triv}{{\mathbf{1}}}
\def\R{\mathbb R}
\def\Z{\mathbb Z}
\def\A{\mathbb A}
\def\Q{\mathbb Q}
\def\C{\mathbb C}
\def\wt{\widetilde}
\def\ira{\stackrel{\sim}{\rightarrow}}
\def\hra{\hookrightarrow}
\def\ra{\rightarrow}
\def\H{\mathbb H}
\def\g{\mathfrak g}
\def\t{\mathfrak t}
\def\n{\mathfrak n}
\def\a{\mathfrak a}
\def\b{\mathfrak b}
\def\q{\mathfrak q}
\def\om{\omega}
\def\l{\mathfrak l}
\def\m{\mathfrak m}
\def\k{\mathfrak k}
\def\p{\mathfrak p}
\def\G{G_{s}}
\def\<{\langle}
\def\>{\rangle}
\title[]%
{Automorphic Forms, Cohomology and CAP Representations. The Case $GL_2$ over a definite quaternion
algebra}
\author{Harald Grobner}
\date{\today}
\curraddr{Max-Planck Institut f\"ur Mathematik\\ Vivatsgasse 7\\ 53111 Bonn, Germany}
\email{harald.grobner@univie.ac.at}
\keywords{cohomology of arithmetic groups, Eisenstein cohomology,
cuspidal automorphic representation, Eisenstein series, residual
spectrum, CAP-representation, Lefschetz numbers, Trace Formula}
\subjclass[2000]{Primary: 11F75, 11F72; Secondary: 20G20, 20H05}
\thanks{The author was supported by the Austrian Science Found (FWF), project no. P21090}
\begin{document}

\begin{abstract}
In this paper we fully describe the cuspidal and the Eisenstein
cohomology of the group $G=GL_2$ over a definite quaternion algebra
$D/\Q$. Functoriality is used to show the existence of
residual and cuspidal automorphic forms, having cohomology in degree
$1$. The latter ones turn out to be CAP-representations, though $G$
satisfies Strong Multiplicity One. A non-vanishing result on
intertwining operators of induced representations will serve as a
starting point for further investigations concerning rationality of
critical $L$-values.
\end{abstract}
\maketitle

\section*{Introduction}
This paper, in a sense, follows up on a work of G. Harder,
\cite{hardergl2}. He considered the so-called Eisenstein cohomology
of arithmetic subgroups of $GL_2$ defined over a number field
$\mathbb K$. Here we want to describe the cohomology of arithmetic
congruence subgroups $\Gamma$ of $G=GL_2'$, the group of invertible
$2\times 2$-matrices with entries in a definite quaternion algebra
$D/\Q$. Furthermore, we do not restrict ourselves to determining the
Eisenstein cohomology of such groups $\Gamma$, but also describe
their cuspidal cohomology. \\\\ The Eisenstein cohomology of
$G=GL_2'$ with respect to a finite-dimensional, irreducible
complex-rational representation $E$ of $G(\R)$, denoted
$H^*_{Eis}(G,E)$, is spanned by classes represented by (a bunch of)
Eisenstein series, residues or derivatives of such. As suggested by
the last sentence, in order to determine the very nature of a
representative of an Eisenstein class, one needs to decide whether a
given Eisenstein series is or is not holomorphic at a certain point.
As shown by Langlands, this means to describe the residual spectrum
of $G$. \\\\ Here the first problem arises. For a {\it quasi-split}
group the so-called Langlands-Shahidi method is available, which
expresses the poles of Eisenstein series by poles of a certain
intertwining operator $M(s,\wt\pi)$. This operator links two
representations induced from a cuspidal automorphic
representation $\wt\pi$ of a proper Levi subgroup. If $\wt\pi$ is {\it generic}, then the Langlands-Shahidi method provides a normalization of $M(s,\wt\pi)$ defined using $L$-functions, which theoreticlally allows one to read off the poles of $M(s,\wt\pi)$. Clearly,
$G=GL_2'$ is not quasi-split and - furthermore - there are even many
cuspidal automorphic representations of $L=D^*\times D^*$ (sitting
inside $G$ as the Levi subgroup of the unique proper standard
parabolic $\Q$-subgroup $P$), which have non-generic local
components. Still, using \cite{ich2}, Prop. 3.1, we are able to
reduce the task of normalizing the global intertwining operator
$$M(s,\wt\pi):\textrm{Ind}^{G(\A)}_{P(\A)}[\sigma\otimes\tau\otimes e^{sH_P(.)}]\ra\textrm{Ind}^{G(\A)}_{P(\A)}
[\tau\otimes\sigma\otimes e^{-sH_P(.)}]$$ to normalizing its local
components at split places. As before, $\wt\pi=\sigma\otimes\tau$ is
a cuspidal automorphic representation of $L=D^*\times D^*$. To
overcome the second problem, namely to actually normalize it at
non-generic split places, we use the recent work of N. Grbac,
\cite{nevenGn}. With this achievement at hand, we are able to
classify the cuspidal automorphic representations $\wt\pi$ of $L$,
which give rise to residual Eisenstein series and show where their
poles are located, see Proposition \ref{prop:poles}.\\ This gives
the first main theorem of this paper, see Theorem \ref{theoremeis},
which comprehensively describes to Eisenstein cohomology of $GL_2'$
with respect to arbitrary coefficients $E$. In particular, necessary
and sufficient conditions for the existence of non-trivial residual
Eisenstein cohomology classes are established and it is explained
how they are distributed (resp. separated from holomorphic
Eisenstein classes) in (resp. by) the degrees of cohomology.
\\\\Degree one seems to be of most interest, since Eisenstein
series contributing to it must form a non-trivial residual
automorphic representation of $G$. After having shown their
existence by having proved the non-vanishing of $H^1_{Eis}(G,E)$ in
Theorem \ref{theoremeis}, we explicitly construct such residual
representations in Theorem \ref{thm:q=1}. Here we use functoriality,
more precisely, the global Jacquet-Langlands Correspondence from
$GL_2'$ to $GL_4$, as it was only recently developed by I. Badulescu
and D. Renard, \cite{ioan}. \\\\As a next step we shortly reconsider Eisenstein cohomology in degree $q=2$ and prove a theorem (cf., Theorem
\ref{t:nonzero}) that serves as the starting point for investigating
rationality results of critical values of $L$-functions. It also answers in our particular case of $G=GL_2'$ a question raised by G. Harder.
\\\\Finally, in section 5, we describe the cuspidal cohomology of
$G$, denoted $H^q_{cusp}(G,E)$. Its elements, i.e., cuspidal
cohomology classes, are represented by cuspidal automorphic forms.
As it follows from our classification of the cohomological unitary
dual of $G(\R)$, see Proposition \ref{theoremcohunit}, cuspidal
cohomology classes gather in pairs $(q,5-q)$, $q=1,2$, symmetrically
around the middle degree of cohomology. They are quite different in
their nature: The classes in degrees $(2,3)$ are represented by a
representation with tempered archimedean component, while the
classes in degree $(1,4)$ will give rise to the existence of
cuspidal automorphic representations with {\it non}-tempered
archimedean component.\\\\Again we use functoriality, {\it expressis
verbis}, the global Jacquet-Langlands Correspondence, to show the
existence of cuspidal automorphic representations giving rise to
cohomology in degrees $q=1$ and $4$, see Theorem
\ref{thm:nontempcusp}. Those serve as examples for cohomological
CAP-representations, although $G$ satisfies Strong Multiplicity One.
Further, it proves the non-vanishing of $H^q_{cusp}(G,E)$,
$q=1,4$.\\ In the last part of this paper, we prove the
non-vanishing of cuspidal cohomology in degrees $q=2$ and $3$. It
follows from the twisted version of Arthur's Trace Formula,
respectively applying the results of \cite{borlabschw} or
\cite{barsp}.
\newpage

\section{Basic group data}
\subsection{The groups}\label{sec:1}
Let $D$ be a quaternion algebra over the field of rational numbers
$\Q$ and $S(D)$ be the set of places over which $D$ does not split.
By the Brauer-Hasse-Noether Theorem, $S(D)$ determines $D$ up to
isomorphism and has always finite and even cardinality (cf.
\cite{platrap} Thm. 1.12). We assume throughout this paper that the
archimedean place $\infty$ is in $S(D)$, so $D\otimes_\Q\R\cong\H$,
the usual Hamilton quaternions and therefore $D$ is not split over
$\Q$. If $x\in D$, we set $\nu(x):=x\overline x$, where
$x\mapsto\overline x$ is the conjugation within $D$. The determinant
$\det'$ of an $n\times n$-matrix $X\in M_n(D)$, $n\geq 1$, is then
the generalization of $\nu$ to matrices:
$\det'(X):=\det(\varphi(X\otimes 1))$, for some isomorphism
$\varphi: M_n(D)\otimes_\Q\overline\Q\ira M_{2n}(\overline\Q)$. It
is independent of $\varphi$ (see, e.g., \cite{platrap} 1.4.1) and a
rational polynomial in the coordinates of the entries of $X$. So the
group

$$G(\Q)=\{X\in M_n(D)|\textrm{det}'(X)\neq 0\}$$
defines an algebraic group over $\Q$, which we denote $GL_n'$. It is
reductive and has a natural simple subgroup $SL_n'$, the group of
matrices of determinant $1$. We will consider $G=GL_2'$ and $\G=SL_2'$.\\\\
Fix a maximal $\Q$-split tours $T\subset G$. It is of the form
$T\cong GL_1\times GL_1$ embedded diagonally into $G$. As the centre
$Z$ of $G$ consists of diagonal scalar matrices, i.e., $Z\cong GL_1$
embedded diagonally into $G$,
$$A:=T\cap SL_2'=\left\{\left( \begin{array}{cc}
a & 0\\
0 & a^{-1}
\end{array}\right)\right\}$$
is a maximal $\Q$-split torus of $SL_2'$ and $T=Z\cdot A$. Observe
that $T$ is also maximally split over $\R$. Therefore we may
identify the set of $\Q$-roots $\Delta_\Q$ and the set of $\R$-roots
$\Delta_\R$ of $G$. They are given as
$\Delta_\Q=\Delta_\R=\{\pm(\alpha-\beta)\}$, where $\alpha$ (resp.
$\beta$) extracts the first (resp. the second) diagonal entry of
$T$. On the level of the simple group $SL_2'$, the two roots
degenerate to $\pm 2\alpha$. We chose positivity in the usual way as
$\Delta^+_\Q=\Delta^+_\R=\{\alpha-\beta\}$. It follows that
$rk_\Q(SL_2')=1$. Furthermore, the $\Q$-Weyl group $W_\Q$ of $G_s$
(and $G$) consists precisely of $1$ and one non-trivial quadratic
element $w$ which acts by exchanging $\alpha$ and $\beta$:
$W_\Q=\{1,w\}$.
\\\\ Observe that $T$ and $A$ are not maximally
split over $\C$. In fact, $GL_2'(\C)\cong
GL_4(\C)$ and so we need to multiply $T$ (resp. $A$) by a
two-dimensional torus $B\cong GL_1\times GL_1$, in order to get a
maximally $\C$-split torus of $GL_2'$ (resp. $SL_2'$). Let
$H:=A\cdot B\hra SL_2'$. Then $H(\C)$ can be viewed as
$$H(\C)=\left\{\left( \begin{array}{cc}
(a,b_1) & 0\\
0 & (a^{-1},b_2)
\end{array}\right)\right\},$$
where the pair $(a,b_i)\in \C^*\times\C^*$, $i=1,2$, represents the first two coordinates of
the quaternion division algebra $D$. The group $H(\C)$ is a Cartan subgroup of $G_s(\C)\cong SL_4(\C)$ (and $G(\C)$).
If we set for $h\in H(\C)$,
$$\varepsilon_i(h)=\left\{
\begin{array}{ll}
b_1+a & i=1\\
a-b_1 & i=2\\
b_2-a & i=3\\
-b_2-a & i=4\\
\end{array}
\right.$$ then the set of absolute (i.e. $\C$-)roots of $G_s=SL_2'$
is generated by the following set of simple roots
$\Delta^\circ=\{\alpha_1:=\varepsilon_1-\varepsilon_2,\alpha_2:=\varepsilon_2-\varepsilon_3,
\alpha_3:=\varepsilon_3-\varepsilon_4\}$, where we observe that the
choice of positivity given by $\Delta^\circ$ is compatible with the
one on $\Delta_\Q$. Accordingly, $rk_{\C}(SL_2')=3$ and the
fundamental weights of $G_s$ are
$\omega_1=\frac{3}{4}\alpha_1+\frac{1}{2}\alpha_2+\frac{1}{4}\alpha_3$,
$\omega_2=\frac{1}{2}\alpha_1+\alpha_2+\frac{1}{2}\alpha_3$ and
$\omega_3=\frac{1}{4}\alpha_1+\frac{1}{2}\alpha_2+\frac{3}{4}\alpha_3$.
As usual their sum is denoted by $\rho$. For the simple reflection
corresponding to the root $\alpha_i$ we write $w_i$ and the Weyl
group generated by these reflections is called $W=W(G,H)=W(G_s,H)$.

\subsection{Real Groups and Symmetric Spaces}\label{sec:realgrps}
The real Lie groups $G(\R)$ and $G_s(\R)$ are isomorphic to
$GL_2(\H)$ and $SL_2(\H)$, respectively. If we fix a Cartan
involution $\theta(x):={}^t\overline x^{-1}$ on $G$, then the
corresponding maximal compact subgroups $K$ of $G(\R)$ and $K_s$ of
$G_s(\R)$ are isomorphic to $K\cong K_s\cong Sp(2)$. As
$G(\R)=Z(\R)^\circ\times G_s(\R)$, the corresponding symmetric
spaces $X=Z(\R)^\circ\backslash G(\R)/K\cong \R_{>0}^*\backslash
GL_2(\H)/Sp(2)$ and $X_s=G_s(\R)/K_s=SL_2(\H)/Sp(2)$ are
diffeomorphic, whence we will not distinguish between them. Their
common real dimension is $\dim_\R X=5$.

\subsection{}
Throughout this paper $E=E_{\lambda}$ denotes an irreducible,
finite-dimensional representation of $G_s(\R)$ on a complex vector
space determined by its highest weight
$\lambda=\sum_{i=1}^3c_i\alpha_i$. We can and will view $E$ also as
a representation of $G(\R)$ by extending it trivially on
$Z(\R)^\circ$. Furthermore, we will always assume that $E$ is the
complexification of an algebraic representation of $G_s/\Q$.

\subsection{The Parabolic Subgroup}\label{sec:parabolic}
Since $G_s=SL_2'$ is a $\Q$-rank one group, we only have one single
proper standard parabolic $\Q$-subgroup $P$ of $G$ (resp.
$P_s=P\cap\G$ of $\G$). It is given by

$$P=\left\{\left( \begin{array}{cc}
a & n\\
0 & b
\end{array}\right)| a,b\in GL_1', n\in D\right\}$$
with Levi- (resp. Langlands-) decomposition $P=LN$ (resp. $P=MTN$), where

$$L=GL_1'\times GL_1'$$
$$M=M_s=SL_1'\times SL_1'$$
and
$$N=N_s\cong D \textrm{ (as additive group)}.$$
The set of absolute simple roots of $M=M_s$ with respect to $B=H\cap
M$ is $\Delta_M^\circ=\{\alpha_1,\alpha_3\}$. For later use we already define and determine
\begin{eqnarray*}
W^P:=W^{P_s}&:= & \{w\in W| w^{-1}(\alpha_i)>0\quad i=1,3\}\\
            & = & \{id, w_2, w_2w_1,w_2w_3,w_2w_1w_3, w_2w_1w_3w_2\}
\end{eqnarray*}
This set is called the set of Kostant representatives, cf. \cite{bowa} III 1.4.

\section{Generalities on the automorphic cohomology of $G$}

\subsection{A space of automorphic forms}\label{sec:firstdec}
Having fixed some basic facts and notation concerning $G$ and $G_s$,
we will now delve into cohomology, the object to be studied in this
paper. The main reference for this general section is \cite{schwfr}.
For the special case of a group of rank one, the reader may also
consult the first sections of the author's
paper \cite{ich2}, where the results of \cite{schwfr} are sumarized for these groups.\\\\
Let $\mathcal U(\g)$ be the universal enveloping algebra of the complexification $\g_\C=\g\otimes_\R\C$ of
the real Lie algebra $\g=\mathfrak{gl}_2(\H)$ of $G(\R)$. Let
$\mathcal Z(\g)$ be the center of  $\mathcal U(\g)$ and $\mathcal
J\lhd\mathcal Z(\g)$ an ideal of finite codimension in $\mathcal
Z(\g)$. We denote by $\mathcal A(G)=\mathcal A(G,\mathcal J)$ the
space of those adelic automorphic forms on $Z(\R)^\circ
G(\Q)\backslash G(\A)$ (in the sense of \cite{bojac}, 4) which are
annihilated by some power of $\mathcal J$. As we will be only
interested in automorphic forms which have non-trivial
$(\g_s,K)$-cohomology with respect to $E$, we take $\mathcal
J\lhd\mathcal Z(\g)$ to be the ideal annihilating the dual
representation
$\check{E}$, cf. \cite{schwfr} Rem. 3.4. \\\\
Having fixed our choice for $\mathcal J$, the space $\mathcal A(G)$
has a certain decomposition along the two $G(\Q)$-conjugacy classes of parabolic $\Q$-subgroups of $G$ (represented by $P$ and $G$), which we shall now describe. First of all we recall that the group
$G(\A_f)$ of finite adelic points acts on the space $C(G(\A_f))$ of
continuous functions $f:G(\A_f)\ra\C$ by right translation.
Topologised via the inductive limit
$$C^\infty(G(\A_f))=\varinjlim C(G(\A_f))^{K_f},$$
$K_f$ running over
all open, compact subgroups of $G(\A_f)$, the space
$C^\infty(G(\A_f))$ of smooth functions $f:G(\A_f)\ra\C$ is a
complete Hausdorff, locally convex vector space with semi-norms
$|.|_\alpha$ say, but generally not Fr\'echet. As a consequence,
$$C^\infty(G(\R),C^\infty(G(\A_f)))=C^\infty(G(\A))\quad \textrm{and}\quad C^\infty(Z(\R)^\circ G(\Q)\backslash G(\A))$$
are Hausdorff, locally convex
vector spaces carrying smooth $G(\R)$- and $G(\A_f)$-actions via
right translation. We recall that a left $Z(\R)^\circ G(\Q)$-invariant smooth
function $f\in C^\infty(G(\A))= C^\infty(G(\R),C^\infty(G(\A_f)))$ is of {\it
uniform moderate growth}, if $f$ satisfies
$$\forall D\in \mathcal U(\g), \forall |.|_\alpha \quad\exists N=N(f,|.|_\alpha), C=C(f,D,|.|_\alpha)\in\R_{\geq 0}:\quad |Df(g)|_\alpha< C\Vert g\Vert^N $$
for all $g\in G(\R).$ Here $\Vert g\Vert=\sqrt{\det'(g)^{-2}+2\cdot
tr(g\cdot {}^t\overline g)}$ is the usual norm on $G(\R)$, cf.
\cite{bojac} 1.2. Now, let $V_G$ be the space of smooth functions
$f\in C^\infty(Z(\R)^\circ G(\Q)\backslash G(\A))$ which are of
uniform moderate growth. It can be decomposed as
$$V_G=V_G(G)\oplus V_G(P),$$
where for $Q\in\{P,G\}$, $V_G(Q)$ denotes the space of $f\in V_G$
which are negligible along every parabolic $\Q$-subgroup $P'$ of $G$
not conjugate to $Q$. (This latter condition means that the constant
term of $f$ along $P'$ is orthogonal to the space of cusp forms of
the Levi subgroup of $P'$, cf. \cite{schwfr} 1.1.) A proof of this
result in a much more general context was already given by R. P.
Langlands in a letter to A. Borel, but may also be found in
\cite{borlabschw}, Thm. 2.4. Declaring
$$\mathcal A_{cusp}(G):=V_G(G)\cap\mathcal A(G)\quad \textrm{and}\quad\mathcal A_{Eis}(G):=V_G(P)\cap\mathcal A(G), $$
we therefore get the desired decomposition of $\mathcal A(G)$ as $(\g_s,K,G(\A_f))$-module:

$$\mathcal A(G)=\mathcal A_{cusp}(G)\oplus\mathcal A_{Eis}(G).$$
We remark that $\mathcal A_{cusp}(G)$ consists precisely of all
cuspidal automorphic forms in $\mathcal A(G)$ (which explains the
subscript), so finding the above decomposition amounted essentially
in finding an appropriate complement - namely $\mathcal A_{Eis}(G)$
- to the space of cuspidal automorphic forms in $\mathcal A(G)$. We
will explain the choice of the subscript of $\mathcal A_{Eis}(G)$ in
the next section.

\subsection{Eisenstein series}\label{sec:Eisseries}
The space $\mathcal A_{Eis}(G)=V_G(P)\cap\mathcal A(G)$
has a further description using Eisenstein series
attached to cuspidal automorphic representations of $L(\A)$. Therefore we need some technical assumptions:\\\\
Let $Q$ be a proper parabolic $\Q$-subgroup of $G$ with Levi- (resp.
Langlands-) decomposition $Q=L_QN_Q=M_QT_QN_Q$. In analogy to our
section \ref{sec:parabolic} we decompose the central torus $T_Q$ of
$L_Q$ as $T_Q=A_QZ$ with $A_Q=G_s\cap T_Q$. The torus $A_Q$ acts on
$N_Q$ by the adjoint representation and hence defines a set of
weights denoted by $\Delta(Q,A_Q)$. Summing up the halfs of these
weights gives a character $\rho_Q$. \\\\Now, let $\varphi_{Q}$ be a
finite set of irreducible representations $\pi=\chi\widetilde\pi$ of
$L_Q(\A)$, given by the following data:
\begin{enumerate}
\item $\chi:A_Q(\R)^\circ\ra\C^*$ is a continuous character. Since $T_Q(\R)^\circ=A_Q(\R)^\circ\times Z(\R)^\circ$,
      we can and will view the derivative $d\chi$ also as a character of
      $\t_Q$.
\item $\widetilde\pi$ is an irreducible, unitary $L_Q(\A)$-subrepresentation
of\\
      $L^2_{cusp}(L_Q(\Q)T_Q(\R)^\circ\backslash L_Q(\A))$, the cuspidal spectrum
      of $L_Q(\A)$. We assume furthermore, that the central character of $\wt\pi$
      induces a continuous, unitary homomorphism $T_Q(\Q)T_Q(\R)^\circ\backslash T_Q(\A)\ra \C^*$
      and that the infinitesimal character of $\wt\pi$ matches the one of the dual of
      an irreducible $M(\R)$-subrepresentation of the Lie algebra cohomology $H^*(\mathfrak{n}_Q,E)$.
\end{enumerate}
In short, these above conditions only mean that $\widetilde\pi$ is a
unitary, cuspidal automorphic representation of $L_Q(\A)$ whose
central and infinitesimal character satisfy the above
obstructions.\\\\ Finally, three further ``compatibility
conditions'' have to be satisfied between these sets $\varphi_{Q}$,
skipped here and listed in \cite{schwfr}, 1.2. The family of all
collections $\varphi=\{\varphi_{Q}\}$ of such finite sets is denoted
$\Psi$.\\\\ Let ${\rm I}_{Q,\wt\pi}:={\rm
Ind}^{G(\A)}_{Q(\A)}[\wt\pi]$ (normalized induction). For a
$K$-finite function $f\in {\rm I}_{Q,\wt\pi}$,
$\Lambda\in(\t_Q)_\C^*$ and $g\in G(\A)$ an Eisenstein series is (at
least) formally defined as

$$E(f,\Lambda)(g)=\sum_{\gamma\in Q(\Q)\backslash G(\Q)}f(\gamma g) e^{\< \Lambda, H_Q(\gamma g)\>}.$$
Here, $H_Q: G(\A)\ra (\t_Q)_\C^*$ is the usual Harish-Chandra height
function. If we set
$\t_Q^+:=\{\Lambda\in(\t_Q)_\C^*|Re(\Lambda)\in\rho_Q+C\}$, where
$C$ equals the open, positive Weyl-chamber with respect to
$\Delta(Q,A_Q)$, the series converges absolutely and uniformly on
compact subsets of $G(\A)\times (\t_Q^*)^+$. It is known that
$E(f,\Lambda)$ is an automorphic form there and that the map
$\Lambda\mapsto E(f,\Lambda)(.)$ can be analytically continued to a
meromorphic function on all of $(\t_Q)_\C^*$, cf. \cite{moewal} p.
140 or \cite{langbook} \S 7. It has only a finite number of at most
simple poles for $\Lambda\in (\a_Q)_\C^*\hookrightarrow(\t_Q)_\C^*$
with $Re(\Lambda)\geq 0$, see \cite{moewal} Prop. IV.1.11. If there
is a singularity at such a $\Lambda=\Lambda_0\in
(\a_Q)_\C^*\cong\C$, the residue ${\rm
Res}_{\Lambda_0}E(f,\Lambda)(g)$ is a smooth function and
square-integrable modulo $Z(\R)^\circ$, cf. \cite{moewal}
I.4.11. It defines hence an element of $L^2_{res}(G(\Q)Z(\R)^\circ\backslash G(\A))$, the residual spectrum of $G(\A)$.\\\\
Now we are able to give the desired description of $\mathcal A_{Eis}(G)$ by Eisenstein series: For
$\pi=\chi\widetilde\pi\in\varphi_P$ let $\mathcal A_\varphi(G)$
be the space of functions, spanned by all possible residues and
derivatives of Eisenstein series defined via all $K$-finite $f\in
{\rm I}_{P,\wt\pi}$ at the value $d\chi$ with $Re(d\chi)\geq 0$. It is a
$(\g_s,K,G(\A_f))$-module. Thanks to the functional equations (see
\cite{moewal} IV.1.10) satisfied by the Eisenstein series
considered, this is well defined, i.e., independent of the choice of
a representative for the class of $P$ (whence we took $P$ itself)
and the choice of a representation $\pi\in\varphi_P$. Finally, we
get

\begin{thm}[\cite{schwfr}, Thm. 1.4; \cite{moewal} III, Thm. 2.6]\label{thm:decEiscoh}
There is a direct sum decomposition as $(\g_s,K,G(\A_f))$-module
$$\mathcal A_{Eis}(G)=\bigoplus_{\varphi\in\Psi}\mathcal A_{\varphi}(G)$$
\end{thm}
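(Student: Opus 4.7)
The plan is to argue along Langlands' lines as streamlined by Franke--Schwermer, exploiting our rank-one setting: since only one $G(\Q)$-conjugacy class of proper parabolic subgroups exists (that of $P$), the coarse decomposition $\mathcal A(G)=\mathcal A_{cusp}(G)\oplus\mathcal A_{Eis}(G)$ recalled in Section 2.1 already concentrates everything Eisenstein-theoretic in $V_G(P)\cap\mathcal A(G)$. It then remains to refine this single summand along the cuspidal support on the Levi $L=GL_1'\times GL_1'$.

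First I would verify that each $\mathcal A_\varphi(G)$ is well-defined and contained in $\mathcal A_{Eis}(G)$. The infinitesimal-character condition built into the definition of $\varphi_P$ ensures that residues and derivatives of $E(f,\Lambda)$ at $\Lambda=d\chi$ are annihilated by a power of $\mathcal J$, hence lie in $\mathcal A(G)$; the standard estimates of M\oe glin--Waldspurger place them in the space $V_G$ of forms of uniform moderate growth; and being built from induction off $P$, their cuspidal projection vanishes, so they are negligible along $G$ itself, i.e.\ they lie in $V_G(P)$. The functional equations satisfied by Eisenstein series guarantee independence of the chosen representative $\pi\in\varphi_P$, so the definition is intrinsic.

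The second step is directness of the sum. By the Langlands constant-term formula, the $L$-cuspidal component of $E(f,\Lambda)_P$ records the inducing datum $\wt\pi$ together with its Weyl-translate $w\wt\pi$ under $W_\Q=\{1,w\}$. The compatibility conditions defining $\Psi$ collect exactly those $W_\Q$-orbits, so two distinct classes $\varphi_1\neq\varphi_2$ produce linearly independent cuspidal projections along $P$. Taking the cuspidal part of the constant term along $P$ therefore separates the summands, and $\sum_{\varphi}\mathcal A_\varphi(G)=\bigoplus_{\varphi}\mathcal A_\varphi(G)$.

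The main obstacle is exhaustion, i.e.\ showing $\mathcal A_{Eis}(G)\subset\bigoplus_{\varphi}\mathcal A_\varphi(G)$. For this I would invoke Langlands' inversion via pseudo-Eisenstein series: any $f\in V_G(P)\cap\mathcal A(G)$ is expressible as a Mellin-type integral of Eisenstein series attached to cuspidal data on $L(\A)$, and a contour shift from the positive chamber $\rho_P+C$ down to $\{Re(\Lambda)\geq 0\}$ picks up exactly the poles at the finitely many singular hyperplanes (contributing the residual part $\mathcal A_\varphi(G)$) plus a remaining contour integral realizing the derivative / wave-packet part. The finite-codimension condition on $\mathcal J$ limits the cuspidal data actually contributing to finitely many classes $\varphi\in\Psi$, while uniform moderate growth forces the tempered tail to be captured by the prescribed derivatives at $\Lambda=d\chi$. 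In higher rank the combinatorics of the singular hyperplane arrangement is the technical heart of the argument; in our one-dimensional $(\t_P)_\C^*$ it collapses to a plain one-variable contour shift, which is the conceptually cleanest incarnation of the Franke--Schwermer / M\oe glin--Waldspurger machinery.
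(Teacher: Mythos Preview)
The paper does not prove this theorem at all: it is stated with an explicit citation to \cite{schwfr}, Thm.~1.4 and \cite{moewal} III, Thm.~2.6, and no argument is given in the text. So there is no ``paper's own proof'' to compare against; the authors simply import the result from the literature.

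Your sketch is a faithful outline of the ideas behind those references, specialized to the rank-one situation at hand. The three steps you isolate --- containment of each $\mathcal A_\varphi(G)$ in $\mathcal A_{Eis}(G)$, directness via the cuspidal component of the constant term along $P$, and exhaustion via the pseudo-Eisenstein inversion and a one-variable contour shift --- are precisely the skeleton of the Franke--Schwermer argument. Your remark that the singular-hyperplane combinatorics collapses in rank one is apt and is what makes the present case transparent. That said, what you have written is an outline rather than a proof: the exhaustion step in particular hides the genuine analytic work (uniform estimates permitting the contour shift, control of the Mellin inversion on the $\mathcal J$-finite part, and the precise identification of the derivative/residue terms with the span defining $\mathcal A_\varphi(G)$). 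If you intend this as a pointer to the literature plus a rank-one gloss, it is fine; if you intend it as a self-contained proof, the exhaustion paragraph would need substantial expansion.
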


\subsection{Definition of automorphic cohomology}\label{sec:defcoh}
Recall from section \ref{sec:firstdec} that $\mathcal A(G)$ is a
$(\g_s,K,G(\A_f))$-module. We can define:\\
The $G(\A_f)$-module
$$H^q(G,E):=H^q(\g_s,K,\mathcal A(G)\otimes E)$$
is called the {\it automorphic cohomology} of $G$ with respect to
$E$. Its natural complementary $G(\A_f)$-submodules
$$H^q_{cusp}(G,E):=H^q(\g_s,K,\mathcal A_{cusp}(G)\otimes E)\quad {\rm and }\quad H^q_{Eis}(G,E):=H^q(\g_s,K,\mathcal A_{Eis}(G)\otimes E)$$
are called {\it cuspidal cohomology} and {\it Eisenstein
cohomology}, respectively. We recall that the space $\mathcal
A_{cusp}(G)$ consists of smooth and $K$-finite functions in\\
$L_{cusp}^2(G(\Q)Z(\R)^\circ\backslash G(\A))$, which itself, acted
upon by $G(\A)$ via right translation, decomposes into a direct
Hilbert sum of irreducible admissible $G(\A)$-representations $\pi$,
each of which occruing with finite multiplicity $m(\pi)$. By Thm.
18.1.(b) of \cite{ioan} we have (Strong) Multiplicity One for
automorphic representations of $G(\A)$ appearing in the discrete
spectrum, whence we get by \cite{bowa}, XIII, a direct sum decomposition

\begin{equation}\label{eq:cuspcohdec}
H^q_{cusp}(G,E)=\bigoplus_{\pi}H^q(\g_s,K,\pi\otimes E),
\end{equation}
the sum ranging over all (equivalence classes of) cuspidal automorphic representations $\pi$ of $G(\A)$.\\\\
Using Theorem \ref{thm:decEiscoh} we can also decompose Eisenstein cohomology.

\begin{equation}\label{eq:Eiscohdec}
H^q_{Eis}(G,E)=\bigoplus_{\varphi\in\Psi}H^q(\g_s,K,\mathcal A_{\varphi}(G)\otimes E).
\end{equation}

\begin{rem}
The reason underlying these two decomposition is essentially the
same. In fact, one can think of (\ref{eq:cuspcohdec}) as a
degenerate version of (\ref{eq:Eiscohdec}), by rereading section
\ref{sec:Eisseries} and replacing in mind each proper parabolic
$\Q$-subgroup by $G$ itself. Chronologically, however, the approach
was as presented here.
\end{rem}

\section{Eisenstein cohomology}
By \eqref{eq:cuspcohdec} and \eqref{eq:Eiscohdec} it is clear, how
one can (at least in principle) describe the cuspidal cohomology and
the Eisenstein cohomology of $G$. One has to determine the
individual $G(\A_f)$-submodules $H^q(\g_s,K,\pi\otimes E)$ and
$H^q(\g_s,K,\mathcal A_{\varphi}(G)\otimes E)$ (notation as in
section \ref{sec:defcoh}). In this section we want to review a
method how to construct Eisenstein cohomology, using the notion of
``$(\pi,w)$-types'' and carry it out concretely. The next subsection
is devoted to the definition of the latter.

\subsection{Classes of type $(\pi,w)$}
Take $\pi=\chi\widetilde\pi\in\varphi_P$ and consider the symmetric
tensor algebra
$$S_\chi(\t^*)=\bigoplus_{n\geq 0} \bigodot^n\t_\C^*,$$ $\bigodot^n\t_\C^*$
being the symmetric tensor product of $n$ copies of $\t^*_\C$, as
module under $\t_\C$: Via the natural identification
$\t_\C\ira\t_\C^*$ given by the standard bracket $\<.,.\>$ it is a
$\t_\C$-module acted upon by $\xi\in\t_\C\cong\t_\C^*$ via
multiplication with $\<\xi,d\chi\>+\xi$ (within the symmetric tensor
algebra). This explains the subscript ``$\chi$''. We extend this
action trivially on $\l_\C$ and $\n_\C$ to get an action of the Lie
algebra $\p_\C$ on the Banach space $S_\chi(\t^*)$. We may also define a $ P(\A_f)$-module structure
via the rule
$$q\cdot X= e^{\<d\chi,H_P(q)\>}X,$$
for $q\in P(\A_f)$ and $X\in S_\chi(\t^*)$. There is
a continuous linear isomorphism
$${\rm Ind}_{P(\A)}^{G(\A)}[\wt\pi\otimes S_\chi(\t^*)]\ira {\rm
I}_{P,\wt\pi}\otimes S_\chi(\t^*),$$ so in particular one can view
the right hand side as a $G(\A)$-module by transport of structure.
Doing this, it is shown in \cite{franke}, pp. 256-257, that

\begin{equation}\label{dchi}
H^q(\g_s,K,{\rm I}_{P,\wt\pi}\otimes S_\chi(\t^*)\otimes E)\cong
$$$$ \bigoplus_{\substack{w\in W^P\\ -w(\lambda+\rho)|_{\a_\C}=d\chi}}
\textrm{Ind}_{P(\A_f)}^{G(\A_f)}\left[H^{q-l(w)}(\m,K_M,\widetilde\pi_\infty\otimes F_w)\otimes\C_{d\chi}\otimes
\widetilde\pi_f\right].
\end{equation}
Some notation needs to be explained: Above, $K_M=K\cap M(\R)$,
$F_w$ is the finite dimensional representation of
$M(\C)$ with highest weight $\mu_w:=w(\lambda+\rho)-\rho|_{\b_\C}$ and
$\C_{d\chi}$ the one-dimensional, complex $P(\A_f)$-module on
which $q\in P(\A_f)$ acts by multiplication by
$e^{\<d\chi,H_P(q)\>}$. A non-trivial class in a summand of
the right hand side is called a cohomology class \emph{of type}
$(\pi,w)$, $\pi\in\varphi_P$, $w\in W^P$. (This notion was first introduced in \cite{schwLNM}.)\\
Further, as $L(\R)\cong M(\R)\times T(\R)^\circ$,
$\widetilde\pi_\infty$ can be regarded as an irreducible, unitary
representation of $M(\R)$. Therefore, a $(\pi,w)$ type consists of
an irreducible representation $\pi=\chi\widetilde\pi$ whose unitary
part $\widetilde\pi=\widetilde\pi_\infty\hat\otimes\widetilde\pi_f$
(completed Hilbert tensor product) has at the infinite place an
irreducible, unitary representation $\widetilde\pi_\infty$ of the
semisimple group $M(\R)$ with non-trivial $(\m,K_M)$-cohomology with
respect to $F_w$.

\subsection{The possible archimedean components of $\pi$}
Recall that $\pi_\infty=\chi\wt\pi_\infty$, so classifying all
possible archimedean components $\pi_\infty$ of $\pi\in\varphi_P$
means to classify all characters $\chi$ of $A(\R)^\circ$ and
representations $\wt\pi_\infty$ of $M(\R)$ in question. \\ By the
above, we shall therefore find all irreducible unitary
representations of $M(\R)=SL_1(\H)\times SL_1(\H)$ which have
non-trivial $(\m,K_M)$-cohomology with respect to a representation
$F_w$, $w\in W^P$. This is achieved in the next

\begin{prop}\label{prop:cohcomp}
Let $V$ be an irreducible unitary representation of $M(\R)$ and $F$ any finite dimensional, irreducible representation of $M(\R)$. Then
$$H^q(\m,K_{M},V\otimes F)=\left\{
\begin{array}{ll}
\C & \textrm{if $q=0$ and $V\cong F$}\\
0 & \textrm{else}
\end{array}
\right.$$
\end{prop}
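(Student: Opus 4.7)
The heart of the matter is that $M(\R) = SL_1(\H)\times SL_1(\H)$ is already compact: each factor $SL_1(\H)$ consists of the unit quaternions and is isomorphic to $SU(2) \cong S^3$. Hence $K_M = K\cap M(\R) = M(\R)$, and on the Lie algebra level $\k_M = \m$. I would begin by making this identification explicit and then observing that the Chevalley--Eilenberg complex computing $(\m,K_M)$-cohomology,
$$C^q(\m,K_M;\,V\otimes F) \;=\; \Hom_{K_M}\!\bigl(\Lambda^q(\m/\k_M),\,V\otimes F\bigr),$$
degenerates, since $\m/\k_M = 0$. Thus the complex is concentrated in degree zero with trivial differential, giving $H^q(\m,K_M;\,V\otimes F)=0$ for $q\geq 1$ and $H^0(\m,K_M;\,V\otimes F) = (V\otimes F)^{M(\R)}$ automatically.

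It remains to compute this space of invariants. Because $M(\R)$ is compact, Peter--Weyl forces an irreducible unitary representation $V$ to be finite dimensional, so
$$(V\otimes F)^{M(\R)} \;\cong\; \Hom_{M(\R)}(V^*,F),$$
which by Schur's lemma is $\C$ exactly when $F\cong V^*$ and vanishes otherwise. The last step is to upgrade $F\cong V^*$ to the asserted $F\cong V$. For this I would invoke the self-duality of irreducible representations of $SU(2)$ (the standard two-dimensional representation preserves a symplectic form, and every irreducible is a symmetric power of it); the same then holds for $SU(2)\times SU(2)$. Consequently $V^*\cong V$ and the condition collapses to $F\cong V$.

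There is no serious obstacle. The only points that require attention are recognizing at the outset that $M(\R)$ is compact, so that no non-trivial machinery of relative Lie algebra cohomology is needed beyond the degenerate Koszul complex, and correctly tracking the dualization in the Schur step, which is dispatched by the self-duality of the $SU(2)$-irreducibles.
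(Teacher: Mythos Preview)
Your proof is correct and follows essentially the same route as the paper's: both use that $M(\R)$ is compact to reduce $(\m,K_M)$-cohomology to the space of $M(\R)$-invariants in degree zero, and then invoke self-duality of the irreducibles of the type $A_1\times A_1$ group to convert the condition $V\cong F^*$ into $V\cong F$. Your version is simply more explicit about the degeneration of the relative Lie algebra complex and the Schur step.
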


\begin{proof}
Since $M(\R)$ is compact, cohomology with respect
to $V\otimes F$ is one-dimensional,
if $V\cong \check{F}$ and $q=0$ and vanishes otherwise.
As $M(\C)$ is of Cartan type A$_1\times$ A$_1$, any finite dimensional representation $F$ of $M(\R)$ is self-dual.
\end{proof}

A direct calculation gives us furthermore the affine action of $W^P$ on $\lambda=\sum_{i=1}^3c_i\alpha_i$, restricted to $\b_\C$. It is listed in Table
\ref{t:muw}.

\begin{table}[h!]
\begin{center}
\begin{tabular}{r|c}
   & $\mu_w=w(\lambda+\rho)-\rho|_{\b_\C}$ \\ \hline
  $id$ & $(2c_1-c_2)\omega_1+(2c_3-c_2)\omega_3$ \\
  $w_2$ & $(c_1+c_2-c_3+1)\omega_1+(c_3+c_2-c_1+1)\omega_3$ \\
  $w_2w_1$ & $(2c_2-c_1-c_3)\omega_1+(c_1+c_3+2)\omega_3$ \\
  $w_2w_3$ & $(c_1+c_3+2)\omega_1+(2c_2-c_1-c_3)\omega_3$ \\
  $w_2w_1w_3$ & $(c_3+c_2-c_1+1)\omega_1+(c_1+c_2-c_3+1)\omega_3$ \\
  $w_2w_1w_3w_2$ & $(2c_3-c_2)\omega_1+(2c_1-c_2)\omega_3$ \\
\end{tabular}
\caption{The affine action of $W^P$}\label{t:muw}
\end{center}
\end{table}

We still need to classify the characters $\chi$ in question. As it follows from (\ref{dchi}), the character $\chi$ must
satisfy $d\chi = -w(\lambda+\rho)|_{\a_\C}$. Its possible values are hence given in Table \ref{t:dchis}.\\\\

\begin{table}[h!]
\begin{center}
\begin{tabular}{r|c}
   & $d\chi =-w(\lambda+\rho)|_{\a_\C}$ \\ \hline
  $id$ & $-(c_2+2)\alpha_2<0$ \\
  $w_2$ & $-(c_1-c_2+c_3+1)\alpha_2<0$ \\
  $w_2w_1$ & $-(c_3-c_1)\alpha_2\leq0$ \\\hline
  $w_2w_3$ & $(c_3-c_1)\alpha_2\geq0$ \\
  $w_2w_1w_3$ & $(c_1-c_2+c_3+1)\alpha_2>0$ \\
  $w_2w_1w_3w_2$ & $(c_2+2)\alpha_2>0$ \\
\end{tabular}
\caption{The possible characters $\chi$}\label{t:dchis}
\end{center}
\end{table}

\subsection{Definition of the Eisenstein map}
In order to construct Eisenstein cohomology classes, we start from a
class of type $(\pi,w)$. Since we are interested in cohomology, we
can assume without loss of generality that $\wt\pi_\infty=F_w$ (cf.,
Proposition \ref{prop:cohcomp}) and by (\ref{dchi}) that
$d\chi=-w(\lambda+\rho)|_{\a_\C}$. Moreover, we can assume that
$d\chi$ lies inside the closed, positive Weyl chamber defined by
$\Delta(P,A)$. So Table \ref{t:dchis} shows that we must have $w\in
W^+:=\{w_2w_3,w_2w_1w_3,w_2w_1w_3w_2\}$. \\\\ We reinterpret
$S_\chi(\t^*)$ as the (Banach) space of formal, finite $\C$-linear
combinations of differential operators $\frac{d^n}{d\Lambda^n}$ ($n$
being a multi-index $n=(n_1,n_2)$ with respect to a fixed coordinate
system) on $\t_\C$. It is a consequence of \cite{moewal}, Prop.
IV.1.11 or \cite{langbook} \S 7, that there is a non-trivial
polynomial $q(\Lambda)$ such that $q(\Lambda)E(f,\Lambda)$ is
holomorphic at $d\chi$ for all $K$-finite $f\in {\rm I}_{P,\wt\pi}$.
Since $\mathcal A_{\varphi}(G)$ can be written as the space which is
generated by the coefficient functions in the Taylor series
expansion of $q(\Lambda)E(f,\Lambda)$ at $d\chi$, $f$ running
through the $K$-finite functions in ${\rm I}_{P,\widetilde\pi}$, we
are able to define a surjective homomorphism $E_{\pi}$ of
$(\g_s,K,G(\A_f))$-modules between the $K$-finite elements in ${\rm
I}_{P,\widetilde\pi}\otimes S_\chi(\t^*)$ and $\mathcal
A_{\varphi}(G)$ by

\begin{equation}\label{eismap}
f\otimes\frac{d^n}{d\Lambda^n}\mapsto
\frac{d^n}{d\Lambda^n}\left(q(\Lambda)E(f,\Lambda)\right)|_{d\chi}.
\end{equation}
Therefore we get a well-defined map in cohomology

$$E^*_\pi:H^*(\g_s,K,{\rm I}_{P,\wt\pi}\otimes S_\chi(\t^*)\otimes
E)\longrightarrow H^*(\g_s,K,\mathcal A_{\varphi}(G)\otimes E)$$
which we will call {\it Eisenstein map}. It is this way, how we can lift
classes of type $(\pi,w)$ (which, as we now recall, are the elements of the space on the left hand side) to Eisenstein cohomology.

\subsection{Holomorphic Case}\label{holeis}
Suppose $[\omega]\in H^q(\g,K,{\rm I}_{P,\widetilde\pi}\otimes
S_\chi(\t^*)\otimes E)$ is a class of type $(\pi,w)$, represented by
a morphism $\omega$, such that for all elements
$f\otimes\frac{d^n}{d\Lambda^n}$ in its image,
$E_{\pi}(f\otimes\frac{d^n}{d\Lambda^n})=
\frac{d^n}{d\Lambda^n}\left(q(\Lambda)E(f,\Lambda)\right)|_{d\chi}$
is just the regular value $E(f,d\chi)$ of the Eisenstein series
$E(f,\Lambda)$, which is assumed to be holomorphic at the point
$d\chi=-w(\lambda+\rho)|_{\a_\C}$ inside the closed, positive Weyl
chamber defined by $\Delta(P,A)$. Then $E^q_\pi([\omega])$ is a
non-trivial Eisenstein cohomology class

$$E^q_\pi([\omega])\in H^q(\g_s,K,\mathcal A_{\varphi}(G)\otimes E).$$
This is a consequence of \cite{schwLNM}, Thm. 4.11.

\subsection{Residual Case}\label{reseis}
Suppose now that there is a $K$-finite $f\in {\rm I}_{P,\widetilde\pi}$ such that the Eisenstein series
$E(f,\Lambda)$ has a pole at $d\chi$ and notice that $E(f,\Lambda)$
is always holomorphic at $0$, by \cite{moewal}, Prop. IV.1.11 (b).
If $[\omega]\in H^q(\g_s,K,{\rm I}_{P,\widetilde\pi}\otimes
S_\chi(\t^*)\otimes E)$ is a class represented by a morphism
$\omega$ having only functions $f$ as in the previous sentence in
its image, then the residual Eisenstein cohomology class
$E^q_\pi\left([\omega]\right)$ defines a class

$$E^q_\pi\left([\omega]\right)\in H^{q'}(\g_s,K,\mathcal A_{\varphi}(G)\otimes E),$$
with $q'=4-q$. This follows from \cite{ich}, Thm. 2.1.\\\\
It is a rather delicate issue to describe the image of the
Eisenstein map $E^q_\pi$ in the residual case (in the above sense).
In order to do that, we need more knowledge on the residues of
Eisenstein series. Hence, we shall determine all relevant poles in
the next section.

\subsection{Poles of Eisensten series}

We have seen that the Eisenstein series we have to consider are
meromorphic functions in the parameter $\Lambda\in\t^*_\C$. However,
we have also seen that we only need to determine the behaviour of
holomorphy of the Eisenstein series at very certain points
$d\chi=-w(\lambda+\rho)|_{\a_\C^*}\in\a_\C^*\subset(\g_s)_{\C}^*$
given in Table \ref{t:dchis}. Thus, we would like to reduce the
problem of finding the poles of Eisenstein series to the level of
$G_s=SL_2'$. \\\\First we observe that writing
$\Lambda=(s_1,s_2)\in\t_\C^*$ with respect to the coordinates given
by the functionals $2\alpha$ and $2\beta$ (cf. section \ref{sec:1})
then the restriction of $\Lambda$ to $\a_\C^*$ is $s_1-s_2$ in the
coordinate given by the functional $2\alpha$. In terms of the
absolute roots of $\g_s$ this means that if
$\Lambda|_{\a_\C^*}=s\alpha_2$, then $s=s_1-s_2$. For our evaluation
points $d\chi$, these values $s$ are hence listed in Table
\ref{t:dchis}.\\\\ As a consequence of this consideration we only
need to check the behaviour of holomorphy of the Eisenstein series
along the line $s_1+s_2=0$ representing $\a_\C^*$ inside $\t_\C^*$,
a point $\Lambda$ in $\a_\C^*$ being identified with its coordinate
$s$ as above. For this purpose, let us recall the following result,
cf. \cite{moewal}, I.4.10:

\begin{prop}
The poles of the Eisenstein series $E(f,\Lambda)$ are the ones of
its constant Fourier coefficient $E(f,\Lambda)_P$ along $P$.
\end{prop}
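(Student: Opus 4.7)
The plan is to establish the two opposite inclusions. One direction is elementary: the constant term $E(f,\Lambda)_P$ is defined by integrating $E(f,\Lambda)$ over the compact quotient $N(\Q)\backslash N(\A)$, so holomorphy of $E(f,\Lambda)$ at a point $\Lambda_0$ immediately implies holomorphy of $E(f,\Lambda)_P$ at $\Lambda_0$; equivalently, every pole of $E(f,\Lambda)_P$ is already a pole of $E(f,\Lambda)$.

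For the harder direction I would argue by contradiction: assume $E(f,\Lambda)$ has a genuine pole at some $\Lambda_0\in\a_\C^*$ with $\textrm{Re}(\Lambda_0)\geq 0$, while $E(f,\Lambda)_P$ is holomorphic there. Since taking residues along a hyperplane in $(\t)_\C^*$ commutes with the finite integration defining the constant term, the residue
$$\phi_0 := \textrm{Res}_{\Lambda_0} E(f,\Lambda)$$
then has vanishing constant term along $P$. By \cite{moewal} I.4.11, $\phi_0$ is a smooth, square-integrable automorphic form modulo $Z(\R)^\circ$, and since $P$ is (up to $G(\Q)$-conjugacy) the \emph{only} proper standard $\Q$-parabolic of $G$ (cf.\ section \ref{sec:parabolic}), the vanishing of its constant term along $P$ forces $\phi_0$ to be a cusp form. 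However, Langlands' spectral decomposition guarantees that any non-zero residue of an Eisenstein series induced from cuspidal data on a proper Levi subgroup is orthogonal to $L^2_{cusp}(G(\Q)Z(\R)^\circ\backslash G(\A))$; therefore $\phi_0=0$, contradicting the assumption that $\Lambda_0$ is a pole.

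A cleaner, more explicit incarnation of the same argument in our rank-one setting is to expand the constant term as
$$E(f,\Lambda)_P(g) = f(g)\, e^{\<\Lambda + \rho_P, H_P(g)\>} + (M(w,\Lambda) f)(g)\, e^{\<w\Lambda + \rho_P, H_P(g)\>},$$
where $w$ is the non-trivial element of the $\Q$-Weyl group $W_\Q=\{1,w\}$ and $M(w,\Lambda)$ is the standard global intertwining operator; since the first summand is entire in $\Lambda$, the poles of $E(f,\Lambda)_P$ are exactly the poles of the matrix coefficients of $M(w,\Lambda)$, and by the general theory (cf.\ \cite{moewal} IV.1.11) these coincide with the poles of $E(f,\Lambda)$ itself. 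The main obstacle, in either incarnation, is exactly this non-trivial direction: ruling out ``hidden'' poles of $E(f,\Lambda)$ that the constant term fails to detect. This rests on the non-trivial input that residues of proper Eisenstein series are square-integrable and orthogonal to cusp forms, but both ingredients are available off the shelf from \cite{moewal}.
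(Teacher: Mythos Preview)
The paper does not prove this proposition; it simply recalls it as a known result with the reference ``cf.\ \cite{moewal}, I.4.10''. Your sketch is a correct outline of the standard argument behind that reference, specialized to the present rank-one situation: the easy direction is immediate from the integral defining the constant term, and for the converse a residue with vanishing constant term along the unique proper parabolic $P$ would be cuspidal yet lie in the space generated by Eisenstein residues, hence vanish. Your explicit constant-term expansion is precisely formula~\eqref{eq:constterm}, which the paper writes down immediately after the proposition and uses for the subsequent analysis.

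One small caveat: your appeal to \cite{moewal} I.4.11 for square-integrability of the residue covers only poles with $\textrm{Re}(\Lambda_0)$ in the open positive chamber. For a statement valid at all $\Lambda_0$ you would either invoke directly the decomposition $\mathcal{A}(G)=\mathcal{A}_{cusp}(G)\oplus\mathcal{A}_{Eis}(G)$ (the residue lies in $\mathcal{A}_{Eis}(G)$ by construction, so if cuspidal it must be zero), or use the functional equation to reduce to the positive chamber. This does not affect the paper's applications, which are confined to $\textrm{Re}(s)>0$.
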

Hence, we analyze this constant Fourier coefficient. It can be
written as
\begin{equation}\label{eq:constterm}
E(f,\Lambda)_P=f\cdot e^{\<\Lambda,H_{P}(.)\>}
+M(s,\widetilde\pi)(f\cdot e^{\<\Lambda,H_{P}(.)\>}),
\end{equation}
where $M(s,\widetilde\pi)$ is the meromorphic function in the parameter
$s\leftrightarrow\Lambda$, given for $g\in G(\A)$, $w$ the only
non-trivial element in $W_\Q$ and $Re(s)\gg 0$ by

$$M(s,\wt\pi):\textrm{Ind}_{P(\A)}^{G(\A)}[\wt\pi\otimes e^{\<\Lambda,H_P(.)\>}]\ra
  \textrm{Ind}_{P(\A)}^{G(\A)}[w(\wt\pi)\otimes
  e^{\<w(\Lambda),H_P(.)\>}]$$
  $$M(s,\widetilde\pi)\psi(g)=\int_{N(\A)}\psi(w^{-1}ng)dn.$$
as in \cite{moewal}, II.1.6. Since $L(\A)=GL_1'(\A)\times
GL_1'(\A)$, the cuspidal automorphic representation $\wt\pi$ can be
decomposed into $\wt\pi=\sigma\hat\otimes\tau$, $\sigma$ and $\tau$
being cuspidal automorphic representations of the factors
$GL_1'(\A)$. The action of $w$ on $\wt\pi$ then reads explicitly as
$w(\wt\pi)=\tau\hat\otimes\sigma$. Moreover, if $\Lambda\in\a^*_\C$
(which - as explained above - we shall always assume from now on)
then $w(\Lambda)=-\Lambda$.\\\\Now, let $S$ be a finite set of
places containing $S(D)$, such that $\widetilde\pi_p$ has got a
non-trivial $L(\Z_p)$-fixed vector for $p\notin S$: That is, outside
$S$, $L$ splits and $\widetilde\pi_p$ is unramified. We can hence
formally write $f\cdot
e^{\<\Lambda,H_{P}(.)\>}=\otimes'_p\psi_p=:\psi$ (restricted tensor
product over all places), where $\psi_p$ is a suitably normalized,
$L(\Z_p)$-fixed function for $p\notin S$. Therefore,
$M(s,\widetilde\pi)(f\cdot e^{\<\Lambda,H_{P}(.)\>})$ factors as
$M(s,\widetilde\pi)\psi=\otimes'_p M(s,\widetilde\pi_p)\psi_p$.
Using the Gindikin-Karpelevich integral formula, as shown in
\cite{lang3}, p. 27 (see also \cite{shahidi2}, p.554), we see that -
again for suitably normalized, non-trivial $L(\Z_p)$-fixed functions
$\widetilde\psi_p$

\begin{equation}\label{gindikin}
M(s,\wt\pi)\psi=\bigotimes_{p\in
S}M(s,\wt\pi_p)\psi_p\otimes\bigotimes'_{p\notin S}
\frac{L(s,\wt\pi_p,\check{r})}{L(1+s,\wt\pi_p,\check{r})}\widetilde
\psi_p.
\end{equation}
Here, $\check{r}$ is the dual of the adjoint representation of the
L-group of $L$ on the Lie algebra of the L-group of $N$ (see \cite{borLfun}, 2 and 3.4). It is
irreducible (\cite{lang3} sec. 6, case $(iii)$) and the
corresponding local $L$-function associated to $\wt\pi$ and
$\check{r}$ at the place $p\notin S$ is denoted
$L(s,\wt\pi_p,\check{r})$, cf. again \cite{borLfun}, 7.2. The following proposition is crucial.

\begin{prop}\label{prop:norm}
For all $p\in S(D)$ the operator $M(s,\wt\pi_p)$ is holomorphic and
non-vanishing in the region $Re(s)>0$. Hence, there is a $K$-finite
$f\in {\rm I}_{P,\wt\pi}$ such that the Eisenstein series
$E(f,\Lambda)$ has a pole at $\Lambda=s\alpha_2$, $Re(s)>0$, if and
only if the product $\prod_{p\notin S(D)}M(s,\wt\pi_p)$ has a pole
at $s$, $Re(s)>0$.
\end{prop}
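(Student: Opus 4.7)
The plan is to prove the local assertion at each $p\in S(D)$ separately and then deduce the global ``if and only if'' by factorizing $M(s,\wt\pi)$ as in \eqref{gindikin} and combining with the proposition recalled from \cite{moewal}, I.4.10 above, which matches the poles of $E(f,\Lambda)$ with those of its constant term \eqref{eq:constterm}.

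At each $p\in S(D)$ the algebra $D_p$ is a division algebra by definition, so $GL_1'(\Q_p)=D_p^*$ is compact modulo its centre, and the Levi $L_p=D_p^*\times D_p^*$ is compact modulo centre. Consequently $\wt\pi_p=\sigma_p\otimes\tau_p$ is a finite-dimensional unitary representation with unitary central character, and hence tempered. For tempered inducing data, the standard intertwining operator on $\textrm{Ind}_{P(\Q_p)}^{G(\Q_p)}[\wt\pi_p\otimes e^{s\alpha_2}]$ is holomorphic and non-vanishing throughout the open positive chamber $Re(s)>0$: for a finite $p\in S(D)$ this is the classical theorem of Harish-Chandra/Silberger on convergence of the defining integral in the positive chamber, combined with the standard non-vanishing assertion (compare the treatment in \cite{nevenGn}); at the archimedean place $\infty\in S(D)$ the analogous statement for $GL_2(\H)$ with tempered data on $\H^*\times\H^*$ is given by the real intertwining-operator theory of Knapp-Stein.

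The global ``if and only if'' is then a straightforward consequence of the factorization \eqref{gindikin}. If $\prod_{p\notin S(D)}M(s,\wt\pi_p)$ is holomorphic at some $s$ with $Re(s)>0$, then, because the factors at $p\in S(D)$ contribute no poles in $Re(s)>0$, the global operator $M(s,\wt\pi)\psi$ is holomorphic at $s$ for every section $\psi$; via \eqref{eq:constterm} and the cited proposition of \cite{moewal} this forces every $E(f,\Lambda)$ to be holomorphic at $s$. Conversely, if the product has a pole at $s$, the local non-vanishing permits a choice of sections $\psi_p$, $p\in S(D)$, for which $M(s,\wt\pi_p)\psi_p\neq 0$; combining with any choice of sections at $p\notin S(D)$ detecting the pole, one obtains a $K$-finite $f$ for which $E(f,\Lambda)$ has a pole at $s$.

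The main obstacle is the non-vanishing part of the local claim at $p\in S(D)$. Holomorphy of $M(s,\wt\pi_p)$ in $Re(s)>0$ is essentially immediate from temperedness, but non-vanishing as an operator is more delicate and really needs the full Knapp-Stein, respectively Harish-Chandra, theory of intertwining operators attached to tempered inducing data. That temperedness of $\wt\pi_p$ holds at every $p\in S(D)$ is in turn built into the geometry of our situation: compactness of $D_p^*/Z(\Q_p)$ at a ramified place is precisely the structural feature which, in the present setting, takes the place of the Langlands-Shahidi normalization that is not at our disposal because $G$ is not quasi-split.
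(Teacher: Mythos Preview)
Your proposal is correct and follows essentially the same approach as the paper: at $p\in S(D)$ the quotient $L(\Q_p)/T(\Q_p)$ is compact, so $\wt\pi_p$ is supercuspidal (in particular tempered), and the standard theory of intertwining operators for tempered data gives holomorphy and non-vanishing in $Re(s)>0$. The paper packages this slightly more economically by directly identifying $M(s,\wt\pi_p)$ as the intertwining operator whose image is the Langlands quotient $J(P(\Q_p),\wt\pi_p,s)$ and citing Borel--Wallach (IV, Lemma 4.4 and XI, Cor.\ 2.7), which yields both holomorphy and non-vanishing at once rather than treating them separately.
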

\begin{proof}
Observe that for $p\in S(D)$, $\wt\pi_p$ is obviously supercuspidal,
(i.e. every matrix coefficient is compactly modulo the center $T(\Q_p)$ of $L(\Q_p)$), since for these
places $p$ the quotient $L(\Q_p)/T(\Q_p)$ is compact itself. So, for any such $p$,
$M(s,\widetilde\pi_p)$ is nothing but the intertwining operator whose image is the
Langlands quotient associate to $P(\Q_p)$, the tempered
representation $\wt\pi_p$ and the value $s$ with $Re(s)>0$. In particular,
$M(s,\widetilde\pi_p)$ is holomorphic and non-vanishing for $Re(s)>0$, see, e.g., \cite{bowa}, IV, Lemma 4.4 and XI, Cor. 2.7. Hence, the proposition follows.
\end{proof}
We distinguish three cases: Either (i) $\sigma$ and $\tau$ are both
not one-dimensional, (ii) exactly one of the representations
$\sigma$ and $\tau$ is one-dimensional, or (iii) $\wt\pi=\sigma\tau$
is a character of $L(\A)$.\\\\ Case (i) is the generic one, i.e.,
$\sigma_p$ and $\tau_p$ are both locally generic for $p\notin S(D)$.
This is well-known and can be seen as follows: For a cuspidal
automorphic representation $\rho$ of $GL_1'(\A)$, let $JL(\rho)$ be
the global Jacquet-Langlands lift as described in
\cite{jaclan} and \cite{gelbjacquet}. It is known that $JL(\rho)$ is an automorphic
representation of $GL_2(\A)$ appearing in the discrete spectrum.
Moreover, $JL(\rho)$ is cuspidal if and only if $\rho$ is not
one-dimensional, see \cite{gelbjacquet} Thm. 8.3. At $p\notin S(D)$
the Levi subgroup $L$ splits, i.e., $L(\Q_p)=GL_2(\Q_p)$ and the
local component of $JL(\rho)$ at such a $p$ satisfies the
(meaningful) equality $JL(\rho)_p=\rho_p$. So, assuming that $\rho$
is not a character, $\rho_p=JL(\rho)_p$ is the local component of a
cuspidal automorphic representation of $GL_2(\A)$. By
\cite{shalika}, corollary on p. 190, $\rho_p$ is hence
generic.\\
Coming back to the setup of case (i), i.e., $\sigma$ and $\tau$ are
both not one-dimensional, we see that for $p\notin S$,                      %% Cogdell einfügen...
$$L(s,\wt\pi_p,\check r)=L(s,\sigma_p\times\check\tau_p),$$
the usual local Rankin-Selberg $L$-function, cf. \cite{jac}. As a local Rankin-Selberg $L$-function has no poles and zeros in the region $Re(s)>1$, $L(s,\sigma_p\times\check\tau_p)^{-1}M(s,\wt\pi_p)$ is
holomorphic and non-vanishing in the region $Re(s)>0$ for $p\notin S$, cf. \eqref{gindikin}. In
\cite{moewalgln}, Prop. I.10, p.639, it is shown that for $p\in
S-S(D)$, $L(s,\sigma_p\times\check\tau_p)^{-1}M(s,\wt\pi_p)$ is
holomorphic and non-vanishing in the region $Re(s)>0$. Hence, the poles of the product $\prod_{p\notin S(D)}M(s,\wt\pi_p)$ are the ones of the partial $L$-function $\prod_{p\notin S(D)}L(s,\sigma_p\times\check\tau_p)$. Furthermore,
by \cite{gelbjacquet} Thm. 8.3 the local components $JL(\sigma)_p$
and $JL(\check{\tau})_p$ are both square-integrable for all $p\in
S(D)$, and therefore $L(s, JL(\sigma)_p\times JL(\check{\tau})_p)$ is
holomorphic and non-zero for $Re(s)>0$ and all $p\in S(D)$. This implies that the poles of $\prod_{p\notin S(D)}M(s,\wt\pi_p)$ in the region $Re(s)>0$ are the poles of the global Rankin-Selberg $L$-function $L(s,JL(\sigma)\times JL(\check{\tau}))$. Combining this with Proposition \ref{prop:norm}, we finally see that
in case (i), there is a ($K$-finite)
$f\in\textrm{I}_{P,\wt\pi}$ such that $E(f,\Lambda)$ has a pole at
$\Lambda=s\alpha_2$ with $Re(s)>0$, if and only if
$L(s,JL(\sigma)\times JL(\check{\tau}))$ has a pole. By the well--known analytic properties of global Rankin-Selberg $L$-functions, this happens if
and only if $s=1$ and $JL(\sigma)=JL(\tau)$, i.e., by strong
multiplicity one for $GL_1'$, if $s=1$ and $\sigma=\tau$.\\\\ For
the remaining cases, we use the work of N. Grbac. He normalized the
local intertwining operators for $p\notin S$ in \cite{nevenGn}, Cor.
2.2.5 using the work of C. M\oe glin and J.-L. Waldspurger
(\cite{moewalgln}). Using Grabc's result and a case-by-case analysis distinguishing the cases $p\in S(D)$, $p\in S-S(D)$ and $p\notin S$ analogous to the reasoning we provided above, it turns out that in case (ii) the poles of the
intertwining operator are the ones of the standard Langlands
$L$-function $L(s-\frac{1}{2},\sigma\check\tau)$ attached to the
representation $\sigma\check\tau$ of $GL_1'(\A)$. But this
$L$-function is entire, see \cite{jaclan}, so there are no poles in
case (ii). \\ If $\wt\pi=\sigma\tau$ is a character of $L(\A)$, then
the poles of an Eisenstein series are determined by the product

\begin{equation}\label{poles:char}
L(s,\sigma\tau^{-1})L(s-1,\sigma\tau^{-1})\prod_{p\in
S(D)}L(s-1,\sigma_p\tau^{-1}_p)^{-1},
\end{equation}
of Hecke $L$-functions. This was proved again in \cite{nevenGn}, Cor. 2.2.5, applying the
idea of \cite{moewalgln}, Lemme I.8, i.e., via induction from
generic representations of smaller parabolic subgroups. We therefore conclude that in case
(iii) $M(s,\wt\pi)$ has a pole at $Re(s)>0$ if and only if $s=2$ and
$\sigma=\tau$. To see this, observe that the poles of the global
Hecke $L$-functions $L(s,\sigma\tau^{-1})L(s-1,\sigma\tau^{-1})$ at $s=1$ are canceled by the zeros of the inverse
of the $|S(D)|$-many local $L$-functions. As $D$ is non-split over
$\Q$, $|S(D)|\geq 2$.\\\\ We summarize the results of this section
in the following proposition.

\begin{prop}\label{prop:poles}
Let $\wt\pi=\sigma\otimes\tau$ be a unitary cuspidal automorphic
representation of $L(\A)$. There is a $K$-finite function
$f\in\textrm{I}_{P,\wt\pi}$, such that $E(f,\Lambda)$ has a pole at
$\Lambda=s\alpha_2$, $Re(s)>0$, if and only if $\sigma=\tau$ and
either

\begin{enumerate}
 \item $\dim\sigma>1$ and $s=1$ or
 \item $\dim\sigma=1$ and $s=2$
\end{enumerate}

\end{prop}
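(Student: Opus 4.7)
The plan is to apply Proposition \ref{prop:norm} to reduce the question to analyzing when $\prod_{p \notin S(D)} M(s, \wt\pi_p)$ has a pole with $Re(s) > 0$, then apply the Gindikin-Karpelevich formula \eqref{gindikin} at unramified places together with the normalizations available at the ramified split places. A case-by-case analysis along cases (i)--(iii) of the preceding discussion, distinguished by how many of $\sigma, \tau$ are one-dimensional, should pin down the poles.

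In case (i), where neither $\sigma$ nor $\tau$ is a character, I would first observe that at split places $\sigma_p$ and $\tau_p$ are generic, via the global Jacquet-Langlands lift and \cite{shalika}. Combining \eqref{gindikin} at unramified places with the local normalization of M\oe glin--Waldspurger \cite{moewalgln} at $p \in S - S(D)$, the ratio $L(s, \sigma_p\times\check\tau_p)^{-1}M(s,\wt\pi_p)$ is holomorphic and non-vanishing for $Re(s) > 0$. At $p \in S(D)$, square-integrability of $JL(\sigma)_p$ and $JL(\check\tau)_p$ ensures that $L(s, JL(\sigma)_p\times JL(\check\tau)_p)$ is holomorphic and non-zero for $Re(s) > 0$. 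Hence the poles of $\prod_{p\notin S(D)} M(s,\wt\pi_p)$ coincide with those of the complete Rankin-Selberg $L$-function $L(s, JL(\sigma) \times JL(\check\tau))$, which has a pole (necessarily at $s = 1$) exactly when $JL(\sigma) = JL(\tau)$, equivalent via strong multiplicity one for $GL_1'$ to $\sigma = \tau$.

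For cases (ii) and (iii), at least one of $\sigma, \tau$ is a character and therefore may have non-generic local components at some split places where no Langlands-Shahidi normalization is available; here I would invoke Grbac's recent normalization \cite{nevenGn} to control the local intertwiners. A parallel three-tier analysis ($p \in S(D)$, $p \in S - S(D)$, $p \notin S$) then identifies the relevant global $L$-function with $L(s - \tfrac{1}{2}, \sigma\check\tau)$ in case (ii), which is entire by \cite{jaclan} and therefore contributes no poles; and with the product \eqref{poles:char} of Hecke $L$-functions in case (iii), where the potential simple pole at $s = 1$ of $L(s, \sigma\tau^{-1})L(s-1, \sigma\tau^{-1})$ (which requires $\sigma = \tau$) is precisely cancelled by the zeros at $s = 1$ of the $|S(D)| \geq 2$ inverse local factors $L(s-1, \sigma_p\tau_p^{-1})^{-1}$, while at $s = 2$ those factors are regular and so the pole of $L(s-1,\sigma\tau^{-1})$ survives exactly when $\sigma = \tau$.

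The main obstacle is in cases (ii) and (iii): without Grbac's normalization one cannot control $M(s, \wt\pi_p)$ at the non-generic split places, and the delicate cancellation count at $s = 1$ in case (iii), which relies crucially on $D$ being non-split (so that $|S(D)| \geq 2$), is precisely what produces the dichotomy between $\dim \sigma > 1$ (giving $s = 1$) and $\dim \sigma = 1$ (giving $s = 2$) in the final statement.
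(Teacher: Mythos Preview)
Your proposal is correct and follows essentially the same route as the paper: reduction via Proposition~\ref{prop:norm}, the three-case split according to how many of $\sigma,\tau$ are characters, Rankin--Selberg analysis with the M{\oe}glin--Waldspurger normalization in case~(i), and Grbac's normalization together with the $|S(D)|\geq 2$ cancellation count in cases~(ii)--(iii). The key ingredients, references, and the logic of the pole/zero bookkeeping all match the paper's argument.
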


\subsection{The image of the Eisenstein map revisited}

In sections \ref{holeis} and \ref{reseis} we gave a (still
incomplete) description of the image (and so {\it vice versa} of the
kernel) of the Eisenstein map $E^q_\pi$. In order to complete it, we
still need to understand the image of $E_\pi^q$ in the case of
non-holomorphic Eisenstein series (see section \ref{reseis}).
Therefore observe that the residue of an Eisenstein series, as
determined in Proposition \ref{prop:poles}, generates a residual
automorphic representation of $G(\A)$, i.e., an irreducible
subrepresentation of $L^2_{res}(G(\Q)Z(\R)^\circ\backslash G(\A))$,
cf. \ref{sec:Eisseries}. In particular, $Z(\R)^\circ$ acts trivially
on such a representation. As $G(\R)=Z(\R)^\circ\times G_s(\R)$, cf.
section \ref{sec:realgrps}, we may view its archimedean component as
an irreducible, unitary representation of $G_s(\R)=SL_2(\H)$.
Clearly, we are only interested in residual automorphic
representations, which have non-vanishing $(\g_s,K)$-cohomology
tensorised by the $G_s(\R)$-representation $E$. Therefore, in order
to understand the image of the Eisenstein map in the case of
non-holomorphic Eisenstein series, we have to understand the
cohomological unitary dual of $G_s(\R)$. It is classified in the
next proposition.

\begin{prop}\label{theoremcohunit}
For each irreducible, finite-dimensional representation $E$ of
$G_s(\R)$ of highest weight $\lambda=\sum_{i=1}^3c_i\alpha_i$ there is an integer
$j(\lambda)$, $0\leq j(\lambda)\leq 3$ such that the irreducible,
unitary representations of $G_s(\R)$ with non-trivial cohomology
with respect to $E$ are the uniquely determined representations
$A_{j}(\lambda)$, $j(\lambda)\leq j\leq 2$ having the property

$$H^q(\g,K,A_{j}(\lambda)\otimes E)=\left\{
\begin{array}{ll}
\C & \textrm{if $q=j$ or $q=5-j$}\\
0 & \textrm{otherwise}
\end{array}
\right.$$
This integer is given as
$$j(\lambda)=\left\{
\begin{array}{ll}
0 & \textrm{if $\lambda=0$}\\
1 & \textrm{if $\lambda=k\omega_2, k\geq1$ }\\
2 & \textrm{if $\lambda\neq k\omega_2, k\geq0$ but $c_1=c_3$}\\
3 & \textrm{otherwise}
\end{array}
\right.$$ Let $J(F,t)$ be the Langlands' Quotient associate to the
triple $(P_s(\R),F,t\alpha_2)$, $F$ an irreducible representation
of $M(\R)$ and $t>0$. Then these representations read as follows:
$$A_0(\lambda)=\triv_{G_s(\R)}=J(F_{id},2)$$
$$A_1(\lambda)=J(F_{w_2},1)$$
$$A_2(\lambda)=\textrm{\emph{Ind}}^{G_s(\R)}_{P_s(\R)}[F_{w_2w_3}\otimes\triv_{A(\R)}]$$
\end{prop}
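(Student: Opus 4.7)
The proof proceeds via the Vogan-Zuckerman classification of irreducible unitary representations with nontrivial $(\g,K)$-cohomology (see \cite{bowa}, Ch.~VI): any irreducible unitary $(\g_s,K)$-module $\pi$ with $H^*(\g_s,K,\pi\otimes E)\neq 0$ is cohomologically induced, $\pi\cong A_\q(\mu)$, from a $\theta$-stable parabolic $\q=\l+\u\subset\g_{s,\C}$ and an admissible character $\mu$ of $\l$, with cohomology given by
$$H^q(\g_s,K,A_\q(\mu)\otimes E)\cong\Hom_{L\cap K}(\wedge^{q-R}(\l\cap\p),\C),\quad R:=\dim_\C(\u\cap\p),$$
and nonzero only when the infinitesimal character of $A_\q(\mu)$ matches that of $\check{E}$. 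Since $G_s(\R)=SL_2(\H)\cong\mathrm{Spin}(5,1)$ has real rank one and no compact Cartan, the task reduces to enumerating the $K$-conjugacy classes of $\theta$-stable Levi subalgebras of $\g_{s,\C}\cong\mathfrak{sl}_4(\C)$.

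A direct enumeration yields a short list, whose members giving rise to cohomological $A_\q(\mu)$ are three in number: $\l=\g_{s,\C}$, one intermediate Levi, and the Cartan $\l=\h_\C$. For these one computes $R=0,1,2$ together with the complementary dimensions $\dim_\C(\l\cap\p)=5,3,1$. The compatibility of the infinitesimal characters constrains the allowed weights: $\l=\g_{s,\C}$ forces $\lambda=0$; the intermediate Levi forces $\lambda=k\omega_2$, $k\geq 0$ (since $\omega_2$ is the unique fundamental weight orthogonal to the roots $\alpha_1,\alpha_3$ of $\m$); and the Cartan case requires only $c_1=c_3$. This reproduces exactly the case distinction in the formula for $j(\lambda)$.

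For each surviving case, a computation of the $L\cap K$-module structure on $\wedge^\bullet(\l\cap\p)$ shows that the $(L\cap K)$-invariants are one-dimensional exactly in the extremal degrees $q=R$ and $q=5-R$, and vanish otherwise. Setting $j:=R$, this gives the three candidates $A_j(\lambda)$, $j=0,1,2$, with cohomology precisely in degrees $(j,5-j)$. One finally identifies each $A_j(\lambda)$ with the representation named in the proposition by matching the Vogan-Zuckerman character $\mu$ with the Langlands parameters $(F_w,t\alpha_2)$ via Tables \ref{t:muw} and \ref{t:dchis}: $A_0(\lambda)=\triv$ is $J(F_{id},2)$ by the standard realization of the trivial module; $A_1(\lambda)$ is $J(F_{w_2},1)$, since reading $\mu_{w_2}$ from Table \ref{t:muw} at $\lambda=k\omega_2$ and the value $s=1$ from Table \ref{t:dchis} matches the Zuckerman data; and $A_2(\lambda)=\textrm{Ind}^{G_s(\R)}_{P_s(\R)}[F_{w_2w_3}\otimes\triv_{A(\R)}]$, since under $c_1=c_3$ the Langlands parameter vanishes (Table \ref{t:dchis} at $w_2w_3$), placing the induction on the unitary axis where it is tempered and irreducible.

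The main obstacle I anticipate is the precise bookkeeping needed to match the Vogan-Zuckerman data $(\q,\mu)$ with the Langlands parameters $(F_w,t\alpha_2)$, which requires an explicit $\rho$-shift and the selection of the correct Kostant representative $w\in W^P$. A secondary issue is the irreducibility of the principal series realizing $A_2(\lambda)$ on the unitary axis, which reduces to the standard reducibility analysis of principal series for rank-one real groups. The symmetry $q\leftrightarrow 5-q$ in the cohomology is an instance of Poincar\'e duality on the five-dimensional symmetric space $X$.
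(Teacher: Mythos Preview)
Your approach is essentially the same as the paper's: both invoke the Vogan--Zuckerman classification, enumerate the $\theta$-stable parabolic subalgebras of $(\g_s)_\C\cong\mathfrak{sl}_4(\C)$ up to $K$-conjugacy, and read off the cohomology degrees and Langlands data. Your additional details on matching the Zuckerman data to the Langlands parameters via Tables~\ref{t:muw} and~\ref{t:dchis} are a welcome elaboration of what the paper only states.

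There is one point where your enumeration is looser than the paper's. You assert that the list contains three relevant Levi subalgebras: $\g_{s,\C}$, one intermediate Levi, and the Cartan. The paper's more careful enumeration actually produces \emph{four} $K$-conjugacy classes of $\theta$-stable parabolics, with real Levi factors $\l_0=\g_s$, $\l_1=\R\oplus\mathfrak{sl}_2(\C)$, $\l_2=\R\oplus\C$, and $\l_3=\R^2\oplus\mathfrak{sl}_2(\R)$. The last of these is not a Cartan and is not on your list. The paper then observes that $\q_2$ and $\q_3$ have the same set of noncompact roots in their nilradicals (hence the same $R=\dim_\C(\u\cap\p_\C)=2$), and invokes the Vogan--Zuckerman result that this forces $A_{\q_2}(\lambda)\cong A_{\q_3}(\lambda)$, reducing to three distinct representations. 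Your sketch should include this fourth class and the argument for why it is redundant; otherwise the claim that the enumeration yields exactly three is unjustified.
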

\begin{proof}[Short sketch of a proof]
This can be achieved using the well-known Vogan-Zuckerman
classification of the cohomological unitary dual of connected
semisimple Lie groups. More precisely it is a consequence of Thm.s
5.5, 5.6, 6.16 and Prop. 6.5 in \cite{vozu}. The information relevant for applying these results to our specific case consists of a list of the so-called $\theta$-stable parabolic subalgebras $\q$ of $(\g_s)_\C\cong\mathfrak{sl}_4(\C)$ up to $K$-conjugacy. There are four such classes, represented by subalgebras $\q_0$, $\q_1$, $\q_2$ and $\q_3$. In fact, the esssential data (cf. Table \ref{t:qs}) is already provided by knowing the Levi subalgebaras $\l_j$, $j=0,1,2,3$, of these parabolic subalgebras and the sets $\Delta_j$ of those roots which appear both, in the direct sum decomposition of the nilpotent radical of $\q_j$, and the $(-1)$-Eigenspace of $\theta$ in $\g_s$. If two such sets of roots coincide, the corresponding $\theta$-stable parabolic subalgebras, although not $K$-conjugate, provide isomorphic irreducible unitary representations and may hence be identified. According to Table \ref{t:qs}, we may hence focus on $\l_0$, $\l_1$ and $\l_2$. Their corresponding irreducible, unitary representations are $A_0(\lambda)$, $A_1(\lambda)$ and $A_2(\lambda)$ as described in our proposition.
\begin{table}[h!]
\begin{center}
\begin{tabular}{r|c|c}
  & $\l_j$ & $(-i)\Delta_j$ \\ \hline
  $j=0$ & $\g_s$ & $\emptyset$ \\
  $j=1$ & $\R\oplus\mathfrak{sl}_2(\C)$ & $\{\varepsilon_1+\varepsilon_3\}$ \\
  $j=2$ & $\R\oplus\C$ & $\{\varepsilon_1+\varepsilon_3,\varepsilon_1+\varepsilon_4\}$ \\
  $j=3$ & $\R^2\oplus\mathfrak{sl}_2(\R)$ & $\{\varepsilon_1+\varepsilon_3,\varepsilon_1+\varepsilon_4\}$ \\
\end{tabular}
\caption{}\label{t:qs}
\end{center}
\end{table}
\end{proof}

\begin{rem}\label{rem:temp}
If $c_1\neq c_3$, i.e., $E\ncong\check{E}$, then the proposition
says that there are no irreducible, unitary cohomological
representations of $G_s(\R)$. \\As it follows from the last paragraph
of \cite{vozu}, p.8, $A_2(\lambda)$ is the only tempered
representation among the $A_i(\lambda)$, $i=0,1,2$.
\end{rem}

Now, let $(\pi,w)$ be a tuple as before, i.e., $w\in W^P$ and
$\pi=\chi\wt\pi\in\varphi_P$ with $\wt\pi=\sigma\otimes\tau$ and
$d\chi=-w(\lambda+\rho)|_{\a_\C}=s\alpha_2$, $Re(s)>0$. Assume that
$\pi$ gives rise to Eisenstein series $E(f,\Lambda)$ which have a
pole at the uniquely determined point $\Lambda=d\chi$, i.e.,
$\wt\pi$ and $s$ satisfy the necessary and sufficient conditions of
Proposition \ref{prop:poles}. Let $\Pi$ be the residual automorphic
representation spanned by the residues of these Eisenstein series.
We know from Proposition \ref{theoremcohunit} that its archimedean
component $\Pi_\infty$ must be one of the representations
$A_i(\lambda)$, $i=0,1,2$ (to be of interest for us). In fact, we
may exclude the case $i=2$. In order to see this, recall from remark
\ref{rem:temp} that $A_2(\lambda)$ is tempered. But as it is shown
in \cite{wallach} Thm. 4.3, any automorphic representation which
appears in the discrete spectrum and has a tempered archimedean
component can only appear in the cuspidal spectrum. But this would
contradict the assumption that $\Pi$ is residual.\\  In the
remaining possible cases, $i=0,1$, the irreducible unitary
representation $\Pi_\infty$ is a proper Langlands quotient
$J(F_w,t)$, $w\in W^P$, $t=1,2$. According to Proposition
\ref{theoremcohunit} the Kostant representative $w$ and the number
$t$ are given es follows: If $\Pi_\infty = A_i(\lambda)$, then
$w=w_i$ is the unique element of $W^P$ of length $\ell(w)=i$ and
$t=2-i$. Therefore, for $i=0,1$ we have an exact sequence
$$0\ra U_i(\lambda)\ra {\rm Ind}_{P_s(\R)}^{G_s(\R)}[F_{w_i}\otimes\C_{(2-i)\alpha_2}]\ra A_i(\lambda)\ra 0$$
for a certain representation $U_i(\lambda)$. It is clear that the
only subquotients of $U_i(\lambda)$ can be $A_1(\lambda)$ or
$A_2(\lambda)$. Since the $(\g_s,K)$-cohomology of $A_j(\lambda)$,
$j=1,2$, is non-trivial in degree $j$, also ${\rm
Hom}_K(\Lambda^j(\g_s/\k),A_j(\lambda)\otimes E)\neq 0$. But, as all
$K$-isotypic components of ${\rm
Ind}_{P_s(\R)}^{G_s(\R)}[F_{w_j}\otimes\C_{(2-j)\alpha_2}]$ are of
multiplicity one, we get that $U_i(\lambda)=A_{i+1}(\lambda)$. 
This enabels us to show the following result which completes the
description of the image of the Eisenstein map.

\begin{thm}\label{thm:imageeis}
Let $(\pi,w)$ be a pair as above and suppose
$\Pi_\infty=A_i(\lambda)$, $i=0,1$. Let $\Omega_{res}(s,\wt\pi)$ be
the span of those classes $[\omega]$ of type $(\pi,w)$ such that the
associated Eisenstein series have a pole at the uniquely determined
point $\Lambda=s\alpha$. Then the image of $\Omega_{res}(s,\wt\pi)$
under the Eisenstein map $E^q_\pi$ is non-trivial if and only if
$q=4-i$.
\end{thm}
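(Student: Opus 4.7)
My plan is a two-sided argument: a cohomological degree count for the ``only if'' direction and a composition-series analysis for the ``if'' direction. Two preliminary facts will be used throughout. First, by \eqref{dchi} and Proposition \ref{prop:cohcomp}, a nonzero class of type $(\pi,w)$ only exists in cohomological degree $q=\ell(w)$, and necessarily satisfies $\wt\pi_\infty\cong F_w$. Second, by section \ref{reseis}, the residual Eisenstein map $E^q_\pi$ sends such a class to $H^{4-q}(\g_s,K,\mathcal A_\varphi(G)\otimes E)$. The pole hypothesis $d\chi=s\alpha_2$ with $s>0$, combined with Table \ref{t:dchis}, forces $w\in W^+=\{w_2w_3,w_2w_1w_3,w_2w_1w_3w_2\}$, so $q=\ell(w)\in\{2,3,4\}$.

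For the ``only if'' direction I would argue as follows. The residues of the Eisenstein series at $\Lambda=s\alpha_2$ span the residual automorphic representation $\Pi\subset\mathcal A_\varphi(G)$, whose archimedean component is by hypothesis $A_i(\lambda)$. Hence $E^q_\pi(\Omega_{res}(s,\wt\pi))$ embeds into $H^{4-q}(\g_s,K,\Pi\otimes E)\cong H^{4-q}(\g_s,K,A_i(\lambda)\otimes E)\otimes\Pi_f$, and by Proposition \ref{theoremcohunit} this space vanishes unless $4-q\in\{i,5-i\}$. Since $q\geq 2$ gives $4-q\leq 2$, and since $i\in\{0,1\}$ gives $5-i\geq 4$, the only option is $4-q=i$, i.e.\ $q=4-i$.

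For the ``if'' direction I would assume $q=4-i$ and produce a non-trivial element of the image. Comparing the Langlands data of $A_i(\lambda)$ (namely $F_{id}$ with $s=2$ when $i=0$, and $F_{w_2}$ with $s=1$ when $i=1$) against Tables \ref{t:muw} and \ref{t:dchis} identifies the $w\in W^+$ attached to $(\pi,w)$ as $w_2w_1w_3w_2$ for $i=0$ and $w_2w_1w_3$ for $i=1$, both of length $\ell(w)=4-i=q$, satisfying $F_w\cong\wt\pi_\infty|_{M(\R)}$ and $d\chi=(2-i)\alpha_2$. By \eqref{dchi} and Proposition \ref{prop:cohcomp} the space of classes of type $(\pi,w)$ in degree $q$ is nontrivial; for a nonzero $[\omega]$ in it, $E^q_\pi([\omega])$ is, up to the normalizing polynomial $q(\Lambda)$, the residue of $E(f,\Lambda)$ at $\Lambda=s\alpha_2$, a nonzero function in $\Pi\subset\mathcal A_\varphi(G)$.

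The hard part will be to show that this residue represents a \emph{nonzero} class in $H^i(\g_s,K,\Pi\otimes E)$ --- that is, that the archimedean Langlands projection onto $A_i(\lambda)$ does not annihilate the cohomology class. I would invoke the composition-series analysis just preceding the theorem: from the short exact sequence $0\to A_{i+1}(\lambda)\to\textrm{Ind}_{P_s(\R)}^{G_s(\R)}[F_{w_i'}\otimes\C_{(2-i)\alpha_2}]\to A_i(\lambda)\to 0$ (with $w_0'=id$ and $w_1'=w_2$) together with the vanishing of the $(\g_s,K)$-cohomology of $A_{i+1}(\lambda)$ in degree $i$ (Proposition \ref{theoremcohunit}), the long exact sequence identifies the degree-$i$ cohomology of the standard module with the one-dimensional space $H^i(\g_s,K,A_i(\lambda)\otimes E)$. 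Since the Eisenstein residue factors through the surjection onto $A_i(\lambda)$ at the archimedean place, the class $E^q_\pi([\omega])$ must map to a generator of this one-dimensional space (tensored with $\Pi_f$), giving the desired non-vanishing.
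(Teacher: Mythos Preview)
Your ``only if'' direction is sound and matches the paper's approach: the residual image lands in the cohomology of $\Pi$, and Proposition~\ref{theoremcohunit} together with the degree constraint $q\ge 2$ forces $4-q=i$.

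The ``if'' direction, however, has a genuine gap. You claim that the long exact sequence attached to
\[
0\to A_{i+1}(\lambda)\to\textrm{Ind}_{P_s(\R)}^{G_s(\R)}[F_{w_i'}\otimes\C_{(2-i)\alpha_2}]\to A_i(\lambda)\to 0
\]
identifies $H^i$ of the standard module with $H^i(\g_s,K,A_i(\lambda)\otimes E)\cong\C$. But the vanishing of $H^i(\g_s,K,A_{i+1}(\lambda)\otimes E)$ only gives \emph{injectivity} of the map $H^i(\textrm{Ind})\to H^i(A_i(\lambda))$; for surjectivity you would need $H^{i+1}(\g_s,K,A_{i+1}(\lambda)\otimes E)=0$, and this is false: by Proposition~\ref{theoremcohunit} it equals $\C$. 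In fact the connecting homomorphism $H^i(A_i(\lambda))\to H^{i+1}(A_{i+1}(\lambda))$ is an isomorphism here, so $H^i(\g_s,K,\textrm{Ind}\otimes E)=0$. (One can see this directly via Delorme's lemma: the standard module has cohomology only in degrees $4-i$ and $5-i$.) Consequently, your final sentence --- that the residue ``must map to a generator'' --- does not follow; naively pushing the degree-$q$ class through the quotient map to $A_i(\lambda)$ lands in $H^{4-i}(\g_s,K,A_i(\lambda)\otimes E)=0$.

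The non-triviality is genuinely more delicate than a composition-series count. The paper does not attempt a self-contained argument; instead it verifies the hypotheses of Rohlfs--Speh \cite{rosp}, Thm.~III.1 (simple pole of maximal order of $M(s,\wt\pi)$ at $s>0$, $w$ the long Weyl element, square-integrability of $\Pi$, and $\Pi_\infty$ realised as the Langlands quotient via $M(s,\wt\pi_\infty)$), and invokes that theorem (or alternatively \cite{rospSO}, Thm.~1.4.4) to conclude $E^{4-i}_\pi(\Omega_{res}(s,\wt\pi))\neq 0$. Your short exact sequence is precisely the input needed for the Rohlfs--Speh machinery, but the machinery itself is what produces the nonzero class.
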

\begin{proof}
Recall from the above that we have an exact sequence 
$$0\ra A_{i+1}(\lambda)\ra {\rm Ind}_{P_s(\R)}^{G_s(\R)}[F_{w_i}\otimes\C_{(2-i)\alpha_2}]\ra A_i(\lambda)\ra 0.$$
Hence, we are in the situation considered in \cite{rospSO}, Lem. 1.4.1. and so formally the same arguments as presented in the proof of \cite{rospSO}, Prop. 1.4.3 show that $E^q_\pi(\Omega_{res}(s,\wt\pi))=0$ if $q'\neq i$, cf. section \ref{reseis}. We therefore obatin that the Eisenstein
map can only have non-trivial image if $q=4-q'=4-i$. However, in this degree the $G(\A_f)$-module $E^q_\pi(\Omega_{res}(s,\wt\pi))$
is really non-zero. This follows either by the arguments given in \cite{rospSO}, Thm. 1.4.4 or -- more directly -- if we observe that the following conditions are matched in our specific case: The pole of the Eisenstein series we are looking at is obtained as a pole of the intertwining operator $M(s,\wt\pi)$ at a point $s>0$. Since $G_s$ is of $\Q$-rank $1$, this pole is automatically of maximal possible order (namely $1$, cf. \cite{moewal} IV. 1.11), $w$ is clearly the longest element in the quotient $W(A):=N_{G_s(\Q)}(A(\Q))/L_s(\Q)\cong\{id,w\}$ and the corresponding representation $\Pi$ spanned by the residues of the Eisenstein series is square-integrable. Furthermore, the archimedean component $\Pi_\infty$ is the image of $M(s,\wt\pi_\infty)$, which is the Langlands quotient associate to the real parabolic subgroup $P(\R)$, the tempered representation $\wt\pi_\infty$ and the value $s>0$, cf. the proof of Proposition \ref{prop:norm}. The minimal degree in which $\Pi_\infty=A_i(\lambda)$ for $i\in\{0,1\}$ has non-zero $(\g_s,K)$-cohomology is $q'=i$, cf. Proposition \ref{theoremcohunit}. Hence, all assumptions made in \cite{rosp}, Thm. III.1 are satisfied in our specific case and the non--triviality of $E^q_\pi(\Omega_{res}(s,\wt\pi))$ is a consequence of \cite{rosp}, Thm. III.1.
\end{proof}

\subsection{Determination of Eisenstein cohomology}

Now we are ready to prove our first main theorem.

\begin{thm}\label{theoremeis}
Let $G=GL_2'$ and $E$ be any finite-dimensional, irreducible,
complex-rational representation of $G(\R)$ of highest weight
$\lambda=\sum_{i=1}^3c_i\alpha_i$ and assume that $Z(\R)^\circ$ acts trivially
on $E$. For any tuple $(\pi,w)$, $w\in W^P$ and
$\pi=\chi\wt\pi\in\varphi_P$ with $\wt\pi=\sigma\otimes\tau$ and
$d\chi=-w(\lambda+\rho)|_{\a_\C}=s\alpha_2$, let
$\Omega_{hol}(s,\wt\pi)$ \emph{(}resp.
$\Omega_{res}(s,\wt\pi)$\emph{)} be the span of those classes
$[\omega]$ of type $(\pi,w)$ such that the associated Eisenstein
series are holomorphic \emph{(}resp. have a pole\emph{)} at the
uniquely determined point $\Lambda=s\alpha$, $Re(s)\geq 0$. Then the Eisenstein
cohomology of $G$ with respect to $E$ is given as follows:\\\\
\textrm{\emph{(1)} \underline{If $\lambda=k\omega_{2}$, $k\in\Z_{\geq
0}$}}:

\begin{eqnarray*}
H^0_{Eis}(G,\C) & = &  \bigoplus_{\substack{d\chi=2\alpha_2\\
 \wt\pi_\infty=\triv_{M(\R)}\\ \sigma=\tau}}
E_{\pi}^4(\Omega_{res}(2,\pi))\quad \textrm{if $k=0$}\\
& = & \bigoplus_{\substack{\sigma\\ \sigma_\infty=\triv_{SL_1(\H)}}} \sigma_f\circ\textrm{\emph{det}}'\\
H^1_{Eis}(G,E) & = &
 \bigoplus_{\substack{d\chi=\alpha_2\\
 \wt\pi_\infty=
 F_{w_2w_1w_3}\\ \sigma=\tau}}
 E^3_\pi(\Omega_{res}(1,\pi))\\
H^2_{Eis}(G,E) & = &
 \bigoplus_{\substack{d\chi=0\\
 \pi_\infty=F_{w_2w_3}}}\textrm{\emph{Ind}}^{G(\A_f)}_{P(\A_f)}[\pi_f]\\
H^3_{Eis}(G,E) & = &
 \bigoplus_{\substack{d\chi=\alpha_2\\
 \wt\pi_\infty=F_{w_2w_1w_3}\\
 \sigma\neq\tau}}\textrm{\emph{Ind}}^{G(\A_f)}_{P(\A_f)}[\C_{\alpha_2}\otimes
 \pi_f]\oplus\\
& & \bigoplus_{\substack{d\chi=\alpha_2\\
 \wt\pi_\infty=F_{w_2w_1w_3}\\
  \sigma=\tau}}\Omega_{hol}(1,\pi)\\
H^4_{Eis}(G,\C) & = &
 \bigoplus_{\substack{d\chi=2\alpha_2\\
 \wt\pi_\infty=\triv_{M(\R)} \textrm{ \emph{but} }\\
 \sigma\neq\tau}}\textrm{\emph{Ind}}^{G(\A_f)}_{P(\A_f)}[\C_{2\alpha_2}\otimes
 \pi_f]\oplus\\
 & & \bigoplus_{\substack{d\chi=2\alpha_2\\
 \wt\pi_\infty=\triv_{M(\R)}\\ \sigma=\tau}} \Omega_{hol}(2,\pi)\quad \textrm{if $k=0$}\\
H^4_{Eis}(G,E) & = &
 \bigoplus_{\substack{d\chi=(c_2+2)\alpha_2\\
 \wt\pi_\infty=\triv_{M(\R)}}}\textrm{\emph{Ind}}^{G(\A_f)}_{P(\A_f)}[\C_{(c_2+2)\alpha_2}\otimes
 \pi_f]\quad \textrm{if $k\neq0$}\\
H^q_{Eis}(G,E) & = & 0 \quad\textrm{ else.}
\end{eqnarray*}
Cohomology in degrees $2,3$ and $4$ is entirely built up by values
of holomorphic Eisenstein series. Cohomology in degree $0$ and $1$
consists of residual classes, which can be represented by
square-integrable, residual automorphic forms. Both spaces do not vanish.\\\\
\textrm{\emph{(2)} \underline{If $\lambda\neq k\om_{2}$, $k\in\Z_{\geq
0}$}}:

\begin{eqnarray*}
H^2_{Eis}(G,E) & = &
 \bigoplus_{\substack{d\chi=0\\
 \pi_\infty=F_{w_2w_3}}}\textrm{\emph{Ind}}^{G(\A_f)}_{P(\A_f)}[\pi_f]\quad \textrm{if $c_1=c_3$}\\
H^3_{Eis}(G,E) & = &
 \bigoplus_{\substack{d\chi=(c_1-c_2+c_3)\alpha_2\\
 \wt\pi_\infty=F_{w_2w_1w_3}}}\textrm{\emph{Ind}}^{G(\A_f)}_{P(\A_f)}[\C_{(c_1-c_2+c_3)\alpha_2}\otimes
 \pi_f]\\
 \end{eqnarray*}
\begin{eqnarray*}
H^4_{Eis}(G,E) & = &
 \bigoplus_{\substack{d\chi=(c_2+2)\alpha_2\\
 \wt\pi_\infty=F_{w_2w_1w_3w_2}}}\textrm{\emph{Ind}}^{G(\A_f)}_{P(\A_f)}[\C_{(c_2+2)\alpha_2}\otimes
 \pi_f]\\
H^q_{Eis}(G,E) & = & 0 \quad\textrm{ else.}
\end{eqnarray*}
All of these spaces are entirely built up by values of holomorphic
Eisenstein series, whence the are no residual Eisenstein cohomology
classes in this case.
\end{thm}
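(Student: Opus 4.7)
The plan is to evaluate each summand $H^q(\g_s,K,\mathcal A_\varphi(G)\otimes E)$ of the decomposition (\ref{eq:Eiscohdec}) via the surjective Eisenstein map (\ref{eismap}). I would first combine Franke's formula (\ref{dchi}) with Proposition \ref{prop:cohcomp}: since $M(\R)$ is compact of Cartan type $A_1\times A_1$, only the degree-zero $(\m,K_M)$-cohomology survives, and this forces $\wt\pi_\infty\cong F_w$. Hence a class of type $(\pi,w)$ can contribute to $H^q_{Eis}(G,E)$ only in source degree $q=\ell(w)$, with $d\chi=-w(\lambda+\rho)|_{\a_\C}$ dictated by Table \ref{t:dchis}. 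The functional equations then allow me to restrict to $Re(d\chi)\geq 0$, so the relevant Kostant representatives in the open positive Weyl chamber are $w_2w_3$, $w_2w_1w_3$ and $w_2w_1w_3w_2$, together with the boundary case $d\chi=0$ (produced by $w_2w_3$, or equivalently $w_2w_1$, when $c_1=c_3$).

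Next, for each such $w$ I would decompose the image of $E_\pi^q$ into a holomorphic and a residual part. In the holomorphic case, Section \ref{holeis} identifies the image in degree $q=\ell(w)$ with the induced module $\textrm{Ind}^{G(\A_f)}_{P(\A_f)}[\C_{d\chi}\otimes\wt\pi_f]$, whereas in the residual case Theorem \ref{thm:imageeis} places the image in the complementary degree $4-\ell(w)$. The position of the poles is completely determined by Proposition \ref{prop:poles}: they require $\sigma=\tau$ plus either ($s=1$ and $\dim\sigma>1$) or ($s=2$ and $\dim\sigma=1$). Imposing in addition $\wt\pi_\infty=F_w$ forces the highest weight of $F_w$ (cf.\ Table \ref{t:muw}) to be symmetric in the two factors of $M(\R)\cong SL_1(\H)\times SL_1(\H)$, which by inspection of Table \ref{t:muw} only happens when $\lambda=k\omega_2$; a comparison with Table \ref{t:dchis} then pins $s=1$ down to $w=w_2w_1w_3$ with $k\geq 1$, and $s=2$ to $w=w_2w_1w_3w_2$ with $k=0$. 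In particular, residual Eisenstein cohomology may occur only under hypothesis (1) of the theorem.

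With these ingredients the assembly becomes bookkeeping. Under (1), $w_2w_3$ contributes an $H^2$-entry via the boundary $d\chi=0$; $w_2w_1w_3$ splits its contribution in source degree $3$ into a holomorphic $H^3$-entry (when $\sigma\neq\tau$) and a residual $H^1$-entry (when $\sigma=\tau$ with $\dim\sigma>1$); and $w_2w_1w_3w_2$ contributes a holomorphic $H^4$-entry for every $k\geq 0$, together with a residual $H^0$-entry arising at $s=2$ from Hecke characters $\sigma=\tau$ of $D^*(\A)$ exactly when $k=0$. Under (2) no residual contributions are possible, so $w_2w_3$, $w_2w_1w_3$ and $w_2w_1w_3w_2$ yield the claimed holomorphic $H^2$, $H^3$, $H^4$-entries; the $H^2$-entry comes from the boundary case and is therefore present only if $c_1=c_3$. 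Non-vanishing of the residual entries $H^0_{Eis}$ and $H^1_{Eis}$ is finally reduced to the existence of a suitable cuspidal $\sigma$: an arbitrary Hecke character of $D^*(\A)$ will do when $k=0$, while for $k\geq 1$ the required $\sigma$ is produced by Jacquet-Langlands transfer from $GL_2$, as carried out in the subsequent Theorem \ref{thm:q=1}. The hard part will be the residual analysis, which depends on Theorem \ref{thm:imageeis} and hence ultimately on the non-vanishing results of Rohlfs-Speh; a secondary subtlety is the treatment of the boundary case $d\chi=0$, where the two Kostant representatives $w_2w_1$ and $w_2w_3$ must not be double-counted when identifying the image of the Eisenstein map with a single copy of $\textrm{Ind}^{G(\A_f)}_{P(\A_f)}[\pi_f]$.
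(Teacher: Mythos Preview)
Your plan follows the paper's proof: same reduction via (\ref{dchi}) and Proposition \ref{prop:cohcomp}, same restriction to $W^+$, same pole analysis via Proposition \ref{prop:poles}, same holomorphic/residual dichotomy via sections \ref{holeis}--\ref{reseis} and Theorem \ref{thm:imageeis}. Two points need repair. First, the residual contribution at $s=1$ from $w=w_2w_1w_3$ occurs for \emph{all} $k\geq 0$, not only $k\geq 1$: for $\lambda=k\omega_2$ one has $\mu_{w_2w_1w_3}=(k+1)\omega_1+(k+1)\omega_3$, so $\dim\sigma_\infty=k+2\geq 2$ already at $k=0$. Second, the claim that symmetry of $F_w$ ``only happens when $\lambda=k\omega_2$'' is not literally true: for $w_2w_1w_3$ and $w_2w_1w_3w_2$ the condition $\sigma_\infty\cong\tau_\infty$ gives only $c_1=c_3$, while for $w_2w_3$ it gives $c_2=c_1+c_3+1$. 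As in the paper, you must combine this symmetry with the value of $s$ forced by Proposition \ref{prop:poles} \emph{and} with dominance of $\lambda$, treating each length separately, to pin down $\lambda=k\omega_2$ and to rule out a pole for $w_2w_3$.

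There is also a genuine gap in your treatment of degree $q=2$ when $c_1\neq c_3$. You assert that the $H^2$-entry ``comes from the boundary case and is therefore present only if $c_1=c_3$'', but when $c_3>c_1$ the representative $w_2w_3$ yields $d\chi=(c_3-c_1)\alpha_2>0$ with holomorphic Eisenstein series (no pole arises, as the paper checks), so section \ref{holeis} would \emph{a priori} produce a non-trivial class in degree $2$. The paper disposes of this by a separate argument: it observes that the relevant archimedean induced representation is unitary and invokes Proposition \ref{theoremcohunit}, which says that for $E\ncong\check E$ (equivalently $c_1\neq c_3$) there is no cohomological unitary representation of $G_s(\R)$, whence $H^2_{Eis}(G,E)=0$. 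You should incorporate this step rather than appeal only to the vanishing of the boundary value $d\chi$.
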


\begin{proof}
By \cite{boserre}, Cor. 11.4.3, $H^q(G,E)=0$ if $q\geq\dim_\R X=5$.
So we may concentrate on degrees $0\leq q\leq 4$. Let us first
consider the case $q=0,4$. Take $\pi=\chi\wt\pi\in\varphi_P$. In
order to be of cohomological interest, it must satisfy
$\wt\pi_\infty=F_{w}$, with $w=w_2w_1w_3w_2\in W^P$ (i.e.,
$\wt\pi_\infty$ is the irreducible finite-dimensional representation
of $M(\R)$ of highest weight
$\mu_{w}=(2c_3-c_2)\omega_1+(2c_1-c_2)\omega_3$) and
$d\chi=(c_2+2)\alpha_2$. This follows from Proposition
\ref{prop:cohcomp} and Tables \ref{t:muw} and \ref{t:dchis}. In
particular, $s=c_2+2\geq 2$. So, in order to get a pole of an
Eisenstein series associated to $\pi$ at $s$, we know from
Proposition \ref{prop:poles} that it is necessary and sufficient
that $\wt\pi$ is a character of the form
$\wt\pi=\sigma\otimes\sigma$ and $c_2=0$. It follows that
$$1=\dim\sigma_\infty=2c_3-c_2+1=\dim\tau_\infty=2c_1-c_2+1,$$
i.e., $c_1=c_3=0$, too, whence $\lambda=0$ and
$\wt\pi_\infty=\triv_{M(\R)}=F_{id}$. Recalling that under this
assumption $s=2$, Theorem \ref{thm:imageeis} together with section
\ref{holeis} show the assertion in degrees $q=0,4$.\\ In degrees
$q=1,3$ we have to have $\wt\pi_\infty=F_w$, with $w=w_2w_1w_3$ and
$d\chi=(c_1-c_2+c_3+1)\alpha_2$. Checking with Table \ref{t:muw}
yields $\dim\sigma_\infty\geq2$ and $\dim\tau_\infty\geq 2$, so both
factors of $\wt\pi$ are infinite-dimensional. In order to give rise
to a pole, our evaluation point $s$ must therefore be equal to $s=1$
(cf., Proposition \ref{prop:poles}), which implies $c_1-c_2+c_3=0$,
i.e., $\lambda=k\omega_2$, $k\in\Z_{\geq 0}$. Further, $\sigma$ must
be $\tau$, whence the equation $c_1=c_3$. Finally, the space
$E^3_\pi(\Omega_{res}(1,\pi))$ is non-trivial because of Theorem
\ref{thm:imageeis} which together with section \ref{holeis} shows
the assertion in degrees $q=1,3$.\\ Finally, we consider degree
$q=2$. Suppose there is a pole of an Eisenstein series at the
uniquely determined point $s=c_3-c_1\geq 0$, cf. Table
\ref{t:dchis}, for a cuspidal representation $\pi\in\varphi_P$. As
$\dim\sigma_\infty=c_1+c_3+3\geq 3$, Proposition \ref{prop:poles}
tells us that $c_3-c_1=1$ and $\sigma=\tau$. But this implies
$c_1+c_3+2=2c_2-c_1-c_3$ (cf., Table \ref{t:muw}), leading to
$2=c_2-2c_1$. As $2c_1\geq c_2$ we end up in a contradiction. Hence,
all Eisenstein series contributing to degree $q=2$ are holomorphic
at the evaluation point $s=c_3-c_1$. Still, the archimedean
component of $\textrm{Ind}^{G(\A)}_{P(\A)}[\pi]$ is a {\it unitary}
representation of $G_s(\R)$. Therefore $H^2_{Eis}(G,E)=0$ for all
representations $E$, which are not self-contragredient by
Proposition \ref{theoremcohunit}. Note that $E\cong\check{E}$ is
equivalent to $c_1= c_3$, cf. remark \ref{rem:temp}, or otherwise
put, $s=0$. Now, the proof of the theorem is complete.
\end{proof}

\subsection{Langlands Functoriality and Eisenstein cohomology in degree $q=1$}
We want to point out that the global Jacquet-Langlands
Correspondence for $GL_n'$, as it was recently established by I. A.
Badulescu and D. Renard in \cite{ioan}, gives an alternative proof
of the non-vanishing of $H^1_{Eis}(G,E)$:

\begin{thm}\label{thm:q=1}
For all $E$ with highest weight $\lambda=k\omega_2$, $k\geq 0$,
there is a residual automorphic representation $\pi$ of $G(\A)$
which has cohomology in degree $q=1$. In particular,
$H^1_{Eis}(G,E)\neq 0$.
\end{thm}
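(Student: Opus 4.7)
The plan is to apply the global Jacquet-Langlands correspondence \cite{ioan} to produce, for each highest weight $\lambda=k\omega_2$, a cuspidal automorphic representation $\sigma$ of $GL_1'(\A)=D^*(\A)$ whose archimedean component is prescribed to fit the cohomological requirements of Table \ref{t:muw} at $w=w_2w_1w_3$. Forming $\wt\pi:=\sigma\otimes\sigma\in\varphi_P$, the associated Eisenstein series will have a pole at $\Lambda=\alpha_2$ by Proposition \ref{prop:poles}(1); the residues span a residual automorphic representation $\pi$ of $G(\A)$ with $\pi_\infty=A_1(\lambda)$, and hence with cohomology in degree $q=1$.

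First, I would produce a cuspidal automorphic representation $\eta$ of $GL_2(\A_\Q)$ whose archimedean component $\eta_\infty$ is the discrete series of $GL_2(\R)$ matched by local Jacquet-Langlands with the irreducible $(k+2)$-dimensional unitary representation $V$ of $SL_1(\H)/\{\pm 1\}$ underlying one factor of the outer tensor decomposition of $F_{w_2w_1w_3}$, and whose local component $\eta_v$ is square integrable for every finite $v\in S(D)$. This is a classical existence statement: the representation associated to a holomorphic cuspidal newform of weight $k+2$ and squarefree level divisible by the finite primes of $S(D)$ will suffice. Since $\eta$ is not one-dimensional and is square integrable at every place in $S(D)$, the global Jacquet-Langlands correspondence of Badulescu-Renard \cite{ioan} yields a cuspidal automorphic representation $\sigma:=JL^{-1}(\eta)$ of $D^*(\A)$ with $\sigma_v=JL^{-1}(\eta_v)$ at each $v\in S(D)$; in particular $\sigma_\infty=V$ is precisely the required finite-dimensional representation of $SL_1(\H)$.

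Setting $\wt\pi:=\sigma\otimes\sigma$, Proposition \ref{prop:poles}(1) guarantees a $K$-finite $f\in{\rm I}_{P,\wt\pi}$ whose Eisenstein series $E(f,\Lambda)$ has a pole at $\Lambda=\alpha_2$. The span $\pi$ of the residues at this point is an irreducible residual automorphic subrepresentation of $L^2_{res}(G(\Q)Z(\R)^\circ\backslash G(\A))$. Analyzing the residue at the archimedean place (as in the proof of Proposition \ref{prop:norm}), $\pi_\infty$ is the image of the local intertwining operator $M(1,\wt\pi_\infty)$, i.e., the Langlands quotient of $\textrm{Ind}^{G_s(\R)}_{P_s(\R)}[F_{w_2w_1w_3}\otimes e^{\alpha_2}]$. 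By Proposition \ref{theoremcohunit}, this Langlands quotient is exactly $A_1(\lambda)$, which has non-vanishing $(\g_s,K)$-cohomology with respect to $E$ in degrees $1$ and $4$.

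Finally, applying Theorem \ref{thm:imageeis} with $i=1$, the class of type $(\pi,w_2w_1w_3)$ spanning $\Omega_{res}(1,\wt\pi)$ in degree $q=3$ maps under the Eisenstein map $E^3_\pi$ to a non-trivial class in $H^{4-3}_{Eis}(G,E)=H^1_{Eis}(G,E)$, represented by $\pi$. This yields both a residual automorphic representation with cohomology in degree one and the non-vanishing $H^1_{Eis}(G,E)\neq 0$. The main obstacle is the careful archimedean bookkeeping: one must verify that the local Jacquet-Langlands correspondence at $\infty$ takes the prescribed discrete series $\eta_\infty$ of $GL_2(\R)$ to precisely the factor $V$ needed to recover $\wt\pi_\infty=F_{w_2w_1w_3}$, and that the Langlands quotient arising from the residue is identifiable with $A_1(\lambda)$ in the classification of Proposition \ref{theoremcohunit}.
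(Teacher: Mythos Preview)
Your argument is correct but takes a genuinely different route from the paper's. The paper's proof of this theorem is advertised as an \emph{alternative} verification via functoriality on the group $G$ itself: it starts from a cuspidal $\rho'$ on $GL_1'(\A)$, lifts it to $GL_2(\A)$ via the classical Jacquet--Langlands map, forms the Speh-type residual representation $\sigma$ of $GL_4(\A)$, and then invokes the global Jacquet--Langlands correspondence of Badulescu--Renard from $GL_2'$ to $GL_4$ to pull $\sigma$ back to a residual $\pi$ on $G(\A)$ with $\pi_\infty|_{G_s(\R)}=J(F_{w_2},1)$. Your approach, by contrast, stays on the Levi level: you use only the classical correspondence for $D^*$ and $GL_2$ to manufacture a suitable cuspidal $\sigma$ on $D^*(\A)$, set $\wt\pi=\sigma\otimes\sigma$, and then appeal to the paper's own internal machinery (Proposition~\ref{prop:poles} for the pole and Theorem~\ref{thm:imageeis} for non-triviality of the residual class). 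In effect you are carrying out the degree-$1$ part of the proof of Theorem~\ref{theoremeis} while making explicit the existence of the cuspidal datum $\sigma$ that the proof there tacitly assumes. Your route needs less machinery (no $GL_2'\!\to\! GL_4$ transfer); the paper's route is a genuine second proof that does not rely on the pole analysis of Proposition~\ref{prop:poles} or on Theorem~\ref{thm:imageeis}.

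Two small points of bookkeeping to tighten. First, $V=\mathrm{Sym}^{k+1}\C^2$ does not in general factor through $SL_1(\H)/\{\pm 1\}$ (only for $k$ odd); just write it as a representation of $SL_1(\H)$. Second, in the paper's normalization the discrete series $D_\ell$ matched to $\mathrm{Sym}^{k+1}\C^2$ has $\ell=k+3$ (see the remark after Theorem~\ref{thm:nontempcusp}), so the modular form should have weight $k+3$ rather than $k+2$. Finally, when you identify the Langlands quotient of $\mathrm{Ind}^{G_s(\R)}_{P_s(\R)}[F_{w_2w_1w_3}\otimes e^{\alpha_2}]$ with $A_1(\lambda)=J(F_{w_2},1)$, it is worth remarking that for $\lambda=k\omega_2$ one has $F_{w_2w_1w_3}=F_{w_2}$ (both have highest weight $(k+1)\omega_1+(k+1)\omega_3$ by Table~\ref{t:muw}), which is what makes this identification immediate.
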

The philosophy of the following proof will be to use the classical
Jacquet-Langlands Correspondence for $GL_1'$ to construct an
appropriate cuspidal automorphic representation of $GL_2(\A)\times
GL_2(\A)$ and then take the unique irreducible quotient of the
representation induced to $GL_4(\A)$. Using the work of Badulescu
and Renard we can therefrom construct a residual representation
$\pi$ of $G(\A)$ having the claimed properties. As this was already shown in Theorem \ref{theoremeis}, we allow ourselves to keep the proof of this fact rather short by assuming some familiarity with the paper \cite{ioan}.\\ Its underlying idea fits very well
with the idea to use functoriality in order to get cohomological
automorphic representations. The interested reader may find a survey
on this topic in \cite{ragsha}, section 5.2.
\begin{proof}
By Table \ref{t:muw},
$F_{w_2}=\textrm{Sym}^{k+1}\C^2\otimes\textrm{Sym}^{k+1}\C^2$.
Obviously, $\textrm{Sym}^{k+1}\C^2$ is a discrete series
representation of $GL_1'(\R)=GL_1(\H)$. So there is a cuspidal
automorphic representation $\rho'$ of $GL_1'(\A)$ having archimedean
component $\rho'_\infty=\textrm{Sym}^{k+1}\C^2$. The classical
Jacquet-Langlands lift, \cite{gelbjacquet} Thm. 8.3, gives us therefore a
cuspidal automorphic representation $\rho:=JL(\rho')$ of $GL_2(\A)$
having only square-integrable representations $\rho_v$ at all places
$v\in S(D)$. By the description of the residual spectrum of the $\Q$-split group
$GL_n$, cf. \cite{moewalgln}, the unique irreducible quotient
$\sigma$ of Ind$^{GL_4(\A)}_{GL_2(\A)\times
GL_2(\A)}[|\det|^{\frac{1}{2}}\rho\otimes
|\det|^{-\frac{1}{2}}\rho]$ is a residual representation of
$GL_4(\A)$. Now, we see by Prop. 15.3.(a) of \cite{ioan} that for
each $v\in S(D)$, $\sigma_v$ is ``$2$-compatible'', i.e., the
inverse of the local Jacquet-Langlands correspondence $JL^{-1}_v$
from the level of $GL_4(\Q_v)$ to the level of $GL'_2(\Q_v)$ is
non-trivial. Here we use that $\rho_v$ is square-integrable at all
places $v\in S(D)$. Hence, we might apply Thm. 18.1 of \cite{ioan}
and see that $\sigma$ is in the image of the global Jacquet-Langlands
correspondence developed in the paper \cite{ioan}. It follows that
there is an unique representation $\pi$ of $G(\A)$ which appears in
the discrete spectrum of $G$ and corresponds to $\sigma$. It is
residual by Prop. 18.2.(b) of \cite{ioan}. By its very construction,
the archimedean component of it satisfies
$\pi_\infty|_{G_s(\R)}=J(F_{w_2},1)$, cf. \cite{ioan}, Thm. 13.8.
Combining this with our Proposition \ref{theoremcohunit} and
\cite{rosp}, Thm. III.1, we get the claim.
\end{proof}

\subsection{Revisiting Eisenstein cohomology in degree $q=2$}
For our special case $G=GL_2'$, we would now like to take up a question once raised by G. Harder.
As the contents of this section will not be needed in the sequel, we allow ourselves to be rather brief.
Recall the adelic Borel-Serre Compactification $\overline X_\A$ of
$X_\A:=G(\Q)\backslash (X\times G(\A_f))$, and its basic properties:
(For this we refer to \cite{boserre} as the original source and
\cite{roh} for the adelic setting.) It is a compact space with
boundary $\partial(\overline X_\A)$ and the inclusion
$X_\A\hra\overline X_\A$ is a homotopy equivalence. Furthermore, there is the natural restriction morphism of
$G(\A_f)$-modules
$$res^q: H^q(X_\A,\wt E)\cong H^q(\overline X_\A,\wt E)\ra H^q(\partial(\overline X_\A),\wt E).$$
Here, $\wt E$ stands for the sheaf with \'espace \'etal\'e $(X\times G(\A_f))\times_{G(\Q)} E$, $E$ given the discrete topology. It is finally a
consequence of \cite{franke} Thm. 18 that there is also the following isomorphism of $G(\A_f)$-modules
$$H^q(\overline X_\A,\wt E)\cong H^q(G,E).$$
It makes therefore sense to talk about Eisenstein cohomology
as a subspace of $H^q(\overline X_\A,\wt E)$ and hence to restrict Eisenstein cohomology classes to the cohomology of the boundary $\partial(\overline X_\A)$.\\\\
Now, let $q=2$. We know that for each $\pi=\sigma\otimes\tau$

$$H^2(\g_s,K,\textrm{Ind}^{G(\A)}_{P(\A)}[\sigma\otimes\tau]\otimes E)\quad\textrm{and}\quad H^2(\g_s,K,\textrm{Ind}^{G(\A)}_{P(\A)}[\tau\otimes\sigma]\otimes E)$$
are linear independent subspaces of $H^2(\partial(\overline
X_\A),\wt E)$, since all Eisenstein series showing up in degree $2$
are holomorphic (cf. \cite{bor2}, Lemma 2.12). Taking the restriction $res^2([\omega])$ of a
class $[\omega]\in \textrm{Ind}^{G(\A_f)}_{P(\A_f)}[\pi_f]\subset
H^2_{Eis}(G,E)$ to the boundary means to calculate the constant term
of the Eisenstein series representing $\omega$ and then taking the
corresponding class (cf. \cite{schwLNM}, Satz 1.10). Therefore, by \eqref{eq:constterm}

\begin{eqnarray*}
res^2([\omega]) & = & [\omega]  \oplus  [M(0,\pi)_*\omega]\\
& \in & H^2(\g_s,K,\textrm{Ind}^{G(\A)}_{P(\A)}[\sigma\otimes\tau]\otimes E)  \oplus  H^2(\g_s,K,\textrm{Ind}^{G(\A)}_{P(\A)}[\tau\otimes\sigma]\otimes E)
\end{eqnarray*}
G. Harder asked (in a more general context), if $[M(0,\pi)_*\omega]\neq
0$ for some Eisenstein class $[\omega]$. This is actually true in
our case. We devote the next theorem to this result. Clearly, we
only need to check the archimedean place on the level of cohomology.

\begin{thm}\label{t:nonzero}
The local intertwining operator $M(0,\pi_\infty)$ is an isomorphism. In particular, it induces an
isomorphism of cohomologies
$$[M(0,\pi_\infty)_*]:H^2(\g_s,K,\textrm{\emph{Ind}}^{G_s(\R)}_{P_s(\R)}[\sigma_\infty\otimes\tau_\infty]\otimes E)
\ira
H^2(\g_s,K,\textrm{\emph{Ind}}^{G_s(\R)}_{P_s(\R)}[\tau_\infty\otimes\sigma_\infty]\otimes
E)$$
\end{thm}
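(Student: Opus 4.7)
The approach is to reduce the claim to showing that $M(0,\pi_\infty)$ is a $(\g_s,K)$-module isomorphism; the induced map on cohomology will then be an isomorphism by functoriality, and both cohomology spaces turn out to be one-dimensional.

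First I would compute the dimensions of the cohomology spaces. Combining the archimedean analogue of \eqref{dchi} with Proposition \ref{prop:cohcomp}, and using that $q=2=\ell(w_2w_3)=\ell(w_2w_1)$, the source space collapses to $H^0(\m,K_M,F_{w_2w_3}\otimes F_{w_2w_3})\cong\C$ and the target to $H^0(\m,K_M,F_{w_2w_1}\otimes F_{w_2w_1})\cong\C$, where self-duality of the finite-dimensional representations of the compact group $M(\R)$ is used. In particular $\tau_\infty\otimes\sigma_\infty=F_{w_2w_1}$, consistent with the action of the non-trivial Weyl element $w$ swapping the two factors of $L$.

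The main step is to show that $M(0,\pi_\infty)$ is a $(\g_s,K)$-module isomorphism. Since $M(\R)=SL_1(\H)\times SL_1(\H)$ is compact (both factors isomorphic to $SU(2)$), the finite-dimensional components $\sigma_\infty$ and $\tau_\infty$ are automatically unitary, so both induced representations in the statement are unitary principal series induced from tempered data at a point on the critical line. Classical Knapp-Stein intertwining operator theory then ensures holomorphicity of $M(s,\pi_\infty)$ at $s=0$ and yields a scalar normalization by archimedean local factors (cf.\ the Gindikin-Karpelevich formula \eqref{gindikin}) under which the intertwiner becomes a unitary equivalence between the two unitary principal series. Since the archimedean $L$-factors entering this normalization are non-zero at $s=0$, the unnormalized operator $M(0,\pi_\infty)$ is itself a non-zero scalar multiple of a unitary isomorphism, hence an isomorphism of $(\g_s,K)$-modules.

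By functoriality, $[M(0,\pi_\infty)_*]$ is then an isomorphism between the two one-dimensional cohomology spaces. The main obstacle will be rigorously verifying the holomorphicity and non-vanishing of $M(0,\pi_\infty)$ at $s=0$: because $L_s(\R)$ is not split over $\R$, one cannot directly invoke Langlands-Shahidi normalizations available in the quasi-split archimedean setting, and one must either argue via Knapp-Stein's analysis of the $R$-group of $L_s(\R)$ inside $G_s(\R)$, or transfer the computation to $GL_2(\R)$ via the archimedean Jacquet-Langlands correspondence, where the requisite non-vanishing of the Rankin-Selberg archimedean $L$-factor at $s=0$ is classical.
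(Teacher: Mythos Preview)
Your outline is broadly correct and heads in the same direction as the paper: both arguments reduce to showing that $M(0,\pi_\infty)$ is a nonzero $(\g_s,K)$-intertwiner between the two induced modules. The difference lies in how nonvanishing is secured and in the role of irreducibility.

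The paper argues as follows. Holomorphicity of $M(s,\pi_\infty)$ at $s=0$ is taken from \cite{moewal}, Prop.~IV.1.11(b). For nonvanishing, rather than appealing to normalization factors, the paper invokes Arthur \cite{arthur}: by property (J$_4$) the adjoint is $M(0,\pi_\infty)^*=M(0,\tau_\infty\otimes\sigma_\infty)$, and line~(3.5) there gives
\[
M(0,\pi_\infty)^*M(0,\pi_\infty)=|r_{\overline P|P}(\pi_\infty)|^2>0,
\]
so $M(0,\pi_\infty)\neq 0$. Since $F_{w_2w_3}$ and $F_{w_2w_1}$ are conjugate by an element of $N_K(A(\R)^\circ)\setminus M(\R)$, the two induced representations are isomorphic, and $M(0,\pi_\infty)$ becomes an \emph{endomorphism} of the irreducible module $\textrm{Ind}^{G_s(\R)}_{P_s(\R)}[F_{w_2w_3}\otimes\triv_{A(\R)}]=A_2(\lambda)$ (Proposition~\ref{theoremcohunit}). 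Schur's lemma then forces it to be an isomorphism.

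Your route via Knapp--Stein normalization would also succeed, but the step you yourself flag as the obstacle---verifying that the archimedean normalizing factors are finite and nonzero at $s=0$ in this non-quasi-split setting---is exactly what the paper bypasses. Arthur's positivity statement for $M^*M$ together with the irreducibility of $A_2(\lambda)$, already available from Proposition~\ref{theoremcohunit}, dispatch the question without ever naming a local $L$-factor or invoking an $R$-group computation or a Jacquet--Langlands transfer. Your preliminary computation that both cohomology spaces are one-dimensional is correct but not needed once the operator itself is an isomorphism.
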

\begin{proof}
The operator $M(0,\pi_\infty)$ is holomorphic, cf. \cite{moewal}, Prop. IV.1.11 (b). The same holds for its adjoint map
$M(0,\sigma_\infty\otimes\tau_\infty)^*=M(0,\tau_\infty\otimes\sigma_\infty)$.
Here we used \cite{arthur}, (J$_4$), p.26. {\it Ibidem}, line (3.5)
shows that the composition $M(0,\pi_\infty)^*M(0,\pi_\infty)$ is a
positive real number
$$M(0,\pi_\infty)^*M(0,\pi_\infty)=|r_{\overline P|P}(\pi_\infty)|^2>0.$$
It follows that $M(0,\pi_\infty)$ is not identically zero. Now
observe that the induced representations
$$\textrm{Ind}^{G_s(\R)}_{P_s(\R)}[F_{w_2w_3}\otimes
\triv_{A(\R)}]\quad\textrm{and}\quad
\textrm{Ind}^{G_s(\R)}_{P_s(\R)}[F_{w_2w_1}\otimes \triv_{A(\R)}]$$
are isomorphic since $\sigma_\infty\otimes\tau_\infty=F_{w_2w_3}$
and $\tau_\infty\otimes\sigma_\infty=F_{w_2w_1}$ are conjugate by an
element $m\in N_K(A(\R)^\circ)$ which is not in $M(\R)$. Therefore,
we can view $M(0,\pi_\infty)$ as an endomorphism
$$M(0,\pi_\infty):\textrm{Ind}^{G_s(\R)}_{P_s(\R)}[F_{w_2w_3}\otimes
\triv_{A(\R)}]\ra \textrm{Ind}^{G_s(\R)}_{P_s(\R)}[F_{w_2w_3}\otimes
\triv_{A(\R)}].$$ Recall that
$\textrm{Ind}^{G_s(\R)}_{P_s(\R)}[F_{w_2w_3}\otimes \triv_{A(\R)}]$
is irreducible. As $M(0,\pi_\infty)$ is not identically zero, it is
hence an isomorphism and the claim holds.
\end{proof}

Observe that $G_s$ is an inner form of the $\Q$-split group $SL_4/\Q$.
Harder writes in \cite{har_rankone}, 2.3.2, that he got a letter of
B. Speh in which she shows that Harder's operator
$T^{\textrm{loc}}_\infty(0)$ induces zero on the level of
cohomology. Here, $T^{\textrm{loc}}_\infty(0)$ is actually the
analog of our operator $M(0,\pi_\infty)$, connecting induced
representations of $L_1:=SL_1\times SL_3$ and $L_3:=SL_3\times SL_1$
sitting as Levi factors of the two non self-associate maximal
parabolic $\Q$-subgroups inside $SL_4$. Of course our
(self-associate) Levi subgroup $L_s$ is an inner form of the second
(and hence self-associate) Levi subgroup $L_2:=SL_2\times SL_2$ of
$SL_4$. We expect that Theorem \ref{t:nonzero} also holds for
$L_2$.\\\\ We also consider Theorem \ref{t:nonzero} as the starting point
for further investigations, whose aim is to establish the
rationality of critical values of $L$-functions, as it was carried
out by Harder in \cite{har_rankone} for the $\Q$-split group
$SL_3/\Q$. The proof of such a result would go beyond the scope of
this paper and is currently work in progress. We hope to report on it in a forthcoming work.

\section{Cuspidal Cohomology}

\subsection{}
Having analyzed Eisenstein cohomology in
the previous sections, we still need to describe the space of cuspidal cohomology
$$H^q_{cusp}(G,E)=H^q(\g_s,K,\mathcal A_{cusp}(G)\otimes E)$$
as defined in section \ref{sec:defcoh}, in order to know the full space $H^q(G,E)$.
Recall from (\ref{eq:cuspcohdec}) that we have the direct sum decomposition
\begin{equation*}
H^q_{cusp}(G,E)=\bigoplus_{\pi}H^q(\g_s,K,\pi\otimes E),
\end{equation*}
the sum ranging over all (equivalence classes of) cuspidal automorphic representations
$$\pi\hookrightarrow L_{cusp}^2(G(\Q)Z(\R)^\circ\backslash G(\A))$$
of $G(\A)$. In particular, $Z(\R)^\circ$ acts trivially
on such a representation. Since $G(\R)=Z(\R)^\circ\times G_s(\R)$, cf.
section \ref{sec:realgrps}, we may again view its archimedean component as
an irreducible, unitary representation of $G_s(\R)$. The
cohomological irreducible unitary representations of $G_s(\R)$ were
classified in the Proposition \ref{theoremcohunit}.\\\\
Let us start with the well-known fact that there cannot be any
non-trivial cuspidal cohomology class in degree $q=0$. There are many
different proofs. An easy way to see it, is that among all
irreducible admissible representations of $G(\A)$ only the trivial
representation $\triv=\triv_{G(\A)}$ can have cohomology in degree
$q=0$, but clearly $\triv$ is not cuspidal. The question whether
there are cohomological cuspidal automorphic representations of
$G(\A)$ for the remaining possible degrees $1\leq q\leq 4$ is more
delicate and we devote the next subsections to it.

\subsection{}
After our consideration of the previous subsection a cuspidal
automorphic representation $\pi$ of $G(\A)$ which has non-trivial
$(\g_s,K)$-cohomology tensorised by $E$ must have a representation
$A_i(\lambda)$, $i=1,2$ as its archimedean component. In case of the
$\Q$-{\it split} general linear group $GL_n$, we know that

\begin{prop}\label{tempgln}
The archimedean component of a cohomological (unitary) cuspidal
representation of $GL_n(\A)$ is tempered.
\end{prop}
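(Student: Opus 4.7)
The plan is to combine Shalika's genericity theorem for cuspidal representations of $GL_n(\A)$ with the classification of the cohomological unitary dual of $GL_n(\R)$, and to observe that the non-tempered cohomological representations of $GL_n(\R)$ fail to be generic.

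First I would invoke Shalika's theorem (cf. the article \emph{The multiplicity one theorem for $GL_n$}), which guarantees that a cuspidal automorphic representation $\pi$ of $GL_n(\A)$ is globally generic; in particular every local component $\pi_v$, including $\pi_\infty$, admits a Whittaker model. So it suffices to show: an irreducible unitary representation of $GL_n(\R)$ which is cohomological and generic must be tempered. This is a purely local statement.

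To prove this local statement I would classify the cohomological unitary dual of $GL_n(\R)$ via Vogan-Zuckerman, analogously to how Proposition \ref{theoremcohunit} was obtained in the paper: every cohomological irreducible unitary representation of $GL_n(\R)$ is of the form $A_\q(\lambda)$ for some $\theta$-stable parabolic subalgebra $\q=\l\oplus\u$ of $\mathfrak{gl}_n(\C)$. For $GL_n(\R)$ the possible real forms of the Levi factor $\l$ are products of factors $GL_{a}(\R)$, $GL_{b}(\C)$ and $GL_{c}(\H)$. Those $A_\q(\lambda)$ for which $\l$ has no factor of the form $GL_{b}(\C)$ with $b\geq 1$ are precisely the fully induced (tempered) representations, obtained from discrete series on the minimal parabolic. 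All other $A_\q(\lambda)$ are (induced from) Speh modules of $GL_{2b}(\R)$; these are known to be non-generic, as they occur as proper Langlands quotients and carry no Whittaker functional. Combining this with the previous paragraph forces $\pi_\infty$ to be tempered.

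The main obstacle is the inspection step: verifying that the Speh-type constituents among the $A_\q(\lambda)$-modules of $GL_n(\R)$ are non-generic, and that the remaining, tempered, $A_\q(\lambda)$-modules are exactly the full principal series induced from a unitary discrete series on a minimal parabolic of $GL_n(\R)$. Once this is established — e.g.\ via the analysis of Kostant's theorem on Whittaker models for quotients, or by appealing directly to the classification of generic unitary representations — the proposition follows without further effort. As an alternative route that avoids this Whittaker analysis, one can apply the description of the discrete spectrum of $GL_n(\A)$ due to Mœglin-Waldspurger \cite{moewalgln}: any non-tempered cohomological representation of $GL_n(\R)$ appears globally only as the archimedean component of a Speh-type residual representation, hence never as the archimedean component of a cuspidal representation.
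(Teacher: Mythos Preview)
The paper does not actually prove this proposition; it only points to \cite{schwSLn}, Thm.~3.3. Your proposal supplies an argument, and its overall strategy---combine Shalika's genericity of cuspidal representations of $GL_n(\A)$ with the non-genericity of the non-tempered cohomological unitary representations of $GL_n(\R)$---is the standard route and is correct in outline. The alternative via M\oe glin--Waldspurger's description of the discrete spectrum of $GL_n$ is equally valid and perhaps cleaner.

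Two inaccuracies in the details, neither fatal to the strategy. First, for $GL_n(\R)$ the Levi factors of $\theta$-stable parabolic subalgebras are products of blocks $\mathfrak{gl}_a(\R)$ and $\mathfrak{gl}_b(\C)$ only; quaternionic factors $\mathfrak{gl}_c(\H)$ do not occur (those arise for the inner form $GL_n'$ treated elsewhere in the paper, not for split $GL_n$). Second, your tempered/non-tempered dichotomy is slightly misstated: the tempered cohomological $A_\q(\lambda)$'s are exactly those whose Levi is a product of factors $\mathfrak{gl}_1(\R)$ and $\mathfrak{gl}_1(\C)$, the latter contributing discrete series of $GL_2(\R)$; so the relevant induction is from a parabolic with Levi $\prod GL_2(\R)\times\prod GL_1(\R)$, not from the minimal parabolic. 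It is the presence of a factor $\mathfrak{gl}_b(\C)$ with $b\geq 2$ (giving a genuine Speh module on $GL_{2b}(\R)$) that produces the non-tempered, non-generic representations. With these corrections your argument goes through.
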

See, e.g., \cite{schwSLn}, Thm. 3.3. We will
show in the next theorem that this does not hold for the non-split
inner form $G$. Again, functoriality will be the key-method.

\subsection{Degrees $q=1,4$}\label{sec:cusp}

\begin{thm}\label{thm:nontempcusp}
For all $E$ with highest weight $\lambda=k\omega_2$, $k\geq 0$,
there is a unitary cuspidal automorphic representation $\pi$ of
$G(\A)$ which has cohomology in degrees $q=1,4$. In particular, the
non-tempered representation $A_1(\lambda)$ appears as the
archimedean component of a cohomological cuspidal automorphic
representation and so Proposition \ref{tempgln} cannot be
generalized to inner forms of $GL_n$.
\end{thm}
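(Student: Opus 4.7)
The approach follows the strategy of Theorem \ref{thm:q=1}, invoking the global Jacquet--Langlands correspondence of Badulescu--Renard \cite{ioan}, but starting from a cuspidal representation of $GL_2(\A)$ which is \emph{not} in the image of the classical Jacquet--Langlands lift from $GL_1'(\A)$. This modification is precisely what forces the representation of $G(\A)$ transferred back via \cite{ioan} to be cuspidal rather than residual, producing the desired CAP representation.

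First, I would construct a cuspidal automorphic representation $\rho$ of $GL_2(\A)$ satisfying two prescribed local properties: its archimedean component $\rho_\infty$ is the discrete series of $GL_2(\R)$ corresponding, under the classical archimedean Jacquet--Langlands correspondence, to the representation $\textrm{Sym}^{k+1}\C^2$ of $GL_1(\H)$; and at some fixed finite place $v_0 \in S(D)\setminus\{\infty\}$ (which is non-empty since $|S(D)|\geq 2$ and $\infty\in S(D)$) the local component $\rho_{v_0}$ is a unitary principal series, hence not essentially square-integrable. The existence of such a $\rho$ is a standard fact (via trace formula techniques, or via explicit automorphic constructions). Crucially, in contrast to the representation used in Theorem \ref{thm:q=1}, this $\rho$ cannot be of the form $JL(\rho')$ for any cuspidal automorphic representation $\rho'$ of $GL_1'(\A)$, since every such Jacquet--Langlands image has essentially square-integrable components at every place in $S(D)$.

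Next, I would form the unique irreducible quotient $\sigma$ of $\textrm{Ind}^{GL_4(\A)}_{GL_2(\A)\times GL_2(\A)}[|\det|^{1/2}\rho \otimes |\det|^{-1/2}\rho]$, a residual automorphic representation of $GL_4(\A)$ by \cite{moewalgln}. One then verifies that at every $v \in S(D)$ the local component $\sigma_v$ is $2$-compatible in the sense of \cite{ioan}: at the archimedean place and at finite places in $S(D)$ where $\rho_v$ is a discrete series this is Prop.~15.3.(a) of \cite{ioan}, just as in the proof of Theorem \ref{thm:q=1}; at the place $v_0$ the Speh construction $\textrm{Sp}(\rho_{v_0},2)$ still lies in the image of the local Jacquet--Langlands map, being a ``basic'' representation in Badulescu's sense. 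Hence Thm.~18.1 of \cite{ioan} yields a representation $\pi$ in the discrete spectrum of $G(\A)$ with $|JL|(\pi)=\sigma$.

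The crux of the proof is to show that $\pi$ is cuspidal. Suppose, for contradiction, that $\pi$ lies in the residual spectrum of $G(\A)$. By the description of the discrete spectrum of $GL_2(D)(\A)$ in \cite{ioan} (the inner-form analogue of the M\oe glin--Waldspurger classification, compare Prop.~18.2), such a $\pi$ must be of the form $MW(\rho',2)$ for a cuspidal automorphic representation $\rho'$ of $GL_1'(\A)$, and its Jacquet--Langlands transfer then satisfies $|JL|(\pi)=\textrm{Sp}(JL(\rho'),2)$. Combining with $|JL|(\pi)=\sigma=\textrm{Sp}(\rho,2)$ and the uniqueness of cuspidal support in the discrete spectrum of $GL_4(\A)$ (\cite{moewalgln}), one concludes $\rho\cong JL(\rho')$, contradicting the choice of $\rho$. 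Hence $\pi$ must be cuspidal; at almost all (split) places $\pi_v$ is a Langlands quotient of an induced representation from the Levi of $P$, whence $\pi$ is nearly equivalent to an Eisenstein construction on $P$ and is therefore a CAP representation. Finally, exactly as in the proof of Theorem \ref{thm:q=1}, the archimedean local Jacquet--Langlands (Thm.~13.8 of \cite{ioan}) identifies $\pi_\infty|_{G_s(\R)}$ with the Langlands quotient $J(F_{w_2},1)=A_1(\lambda)$, which by Proposition \ref{theoremcohunit} has non-trivial $(\g_s,K)$-cohomology with respect to $E$ precisely in degrees $q=1$ and $q=4$. The decomposition \eqref{eq:cuspcohdec} then shows that $\pi$ contributes non-trivially to $H^q_{cusp}(G,E)$ in these degrees, proving the claim. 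The main obstacle is the verification of cuspidality of $\pi$: it rests on careful bookkeeping with the Badulescu--Renard classification of the discrete spectrum on both sides and on the uniqueness of the cuspidal support of a Speh representation.
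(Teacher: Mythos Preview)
Your proof is correct and follows essentially the same strategy as the paper's: construct a cuspidal $\rho$ on $GL_2(\A)$ with the prescribed discrete series at infinity but a non-square-integrable local component at some finite $p_0\in S(D)$ (the paper does this explicitly via a classical modular cusp form of weight $k+3$ and level coprime to $p_0$, so that $\rho_{p_0}$ is spherical), form the residual Speh representation $\sigma$ on $GL_4(\A)$, transfer back via Badulescu--Renard, and identify $\pi_\infty$ using \cite{ioan}, Thm.~13.8. For cuspidality the paper simply invokes Prop.~18.2 of \cite{ioan}, which is exactly the statement you unpack in your contradiction argument; you are in fact slightly more careful than the paper in addressing $2$-compatibility of $\sigma_{v_0}$ at the place where $\rho_{v_0}$ is a principal series, a point the paper passes over when citing Thm.~18.1.
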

\begin{proof}
Let $k\geq 0$. By our Proposition \ref{theoremcohunit}, we need to
find a unitary cuspidal automorphic representation $\pi$ of
$GL_2'(\A)$ which satisfies $\pi_\infty|_{G_s(\R)}=J(F_{w_2},1)$ at
the archimedean component. Therefore, let
$\rho_\infty':=\textrm{Sym}^{k+1}\C^2$ be the $k+1$-th symmetric
power of the standard representation of $GL_1'(\R)$. By the local
Jacquet-Langlands correspondence $JL_\infty$ between representations
of $GL_1'(\R)=\H^*$ and $GL_2(\R)$ (cf. \cite{gelbjacquet}, Thm.
8.1) it lifts to a square-integrable, irreducible, unitary
representation $\rho_\infty$ of $GL_2(\R)$. Hence, $\rho_\infty=D_\ell$ for some integer $\ell\geq 2$, $D_\ell$ denoting as usual the discrete series representation of $GL_2(\R)$ of minimal $O(2)$-type $\ell$. \\\\ Now, take a non-archimedean
prime $p_0\in S(D)$ and let $N$ be a positive integer prime to $p_0$. If
$N$ is big enough, which we assume, then there is a non-zero modular
cuspform $f$ of weight $\ell$ and level $N$ of $GL_2(\R)$. We may assume that $f$ is an Eigenfunction for all Hecke operators $T_p$, $(p,N)=1$, cf. \cite{rog} p.21.
% PASSENDER zentraler Charakter s.d. k+3 auch ungerade sein kann!
As in \cite{rog}, (1.5) $f$ defines a cuspidal automorphic form $\varphi$ for $GL_2(\A)$ and
hence an admissible subrepresentation $\rho$ of $\mathcal
A_{cusp}(GL_2(\Q)\backslash GL_2(\A))$. As proved in \cite{rog}, Prop. 2.13, $\rho$ is in fact irreducible, its archimedean component is $\rho_\infty=D_\ell$ and $\rho_{p_0}$ is a spherical representation of $GL_2(\Q_{p_0})$, whence it is in the
principal series. As a consequence, $\rho_{p_0}$ is not
square-integrable. By the characterization of the image of the
global Jacquet-Langlands correspondence $JL$ from the level of
$GL_1'(\A)$ to the level of $GL_2(\A)$, cf. \cite{gelbjacquet}, Thm.
8.3, $\rho$ can therefore not be of the form $\rho=JL(\rho')$ for
any automorphic representation $\rho'$ of
$GL_1'(\A)$ (although it transfers at the archimedean place!)\\\\
Now take $\sigma$ to be the unique irreducible quotient of

$$\textrm{Ind}^{GL_4(\A)}_{GL_2(\A)\times GL_2(\A)}[|\det|^{\frac{1}{2}}\rho\otimes|\det|^{-\frac{1}{2}}\rho].$$
By \cite{moewalgln} it is a residual automorphic representation of
$GL_4(\A)$. According to \cite{ioan}, Thm. 18.1 there is a unique
square-integrable automorphic representation $\pi$ of $GL_2'(\A)$
which is mapped onto $\sigma$ via the global Jacquet Langlands
Correspondence from the level of $GL_2'(\A)$ to the level of
$GL_4(\A)$, as developed in the aforementioned paper. As $\rho$ is
not in the image of the global Jacquet-Langlands correspondence,
Prop. 18.2 of \cite{ioan} ensures that $\pi$ is cuspidal. But as
$\rho_\infty$ transfers via the local Jacquet-Langlands
correspondence to $\rho_\infty'=\textrm{Sym}^{k+1}\C^2$ we see that
$\pi_\infty|_{G_s(\R)}=J(F_{w_2},1)$, cf. \cite{ioan}, Thm. 13.8.
This proves the theorem.
\end{proof}

\begin{rem}
The number $\ell$ can easily be made explicit and equals $\ell=k+3$. In fact, complexifying the action of $\rho_\infty'$ gives a representation of $SL_1'(\C)=SL_2(\C)$ which restricts to an irreducible representation $\wt\rho_\infty$ of the split real form $SL_2(\R)$. On the other hand, restricting $D_\ell$ to $SL_2(\R)$ defines two irreducible discrete series representations $D^+_\ell$ and $D^-_\ell$ of $SL_2(\R)$. As the local Jacquet-Langlands correspondence at the archimedean place can be characterized as the assigment $JL_\infty$ sending $\rho_\infty'$ to the unique $D_\ell$, which satisfies that $D^\pm_\ell$ and $\wt\rho_\infty$ appear as irreducible subquotients of the same principal series representation of $SL_2(\R)$, we must have $\ell=\dim\wt\rho_\infty+1=k+3$, cf. \cite{knapp}, II \S 5.
\end{rem}

Let us put Theorem \ref{thm:nontempcusp} in a broader context. To that end, recall
the notion of a CAP representation of a general connected reductive
algebraic group $H/\Q$: Therefore, let $H$ be the inner form of a
quasi-split group $\wt H$ and $\wt P$ be a {\it proper} parabolic
subgroup of $\wt H$ with Levi subgroup $\wt L$. A unitary cuspidal
automorphic representation $\pi$ of $H(\A)$ is called CAP, if there
is a unitary cuspidal automorphic representation $\eta$ of
$\widetilde L(\A)$ such that $\pi$ is nearly equivalent (i.e.,
locally equivalent at all but finitely many places) to an
irreducible subquotient of Ind$^{\wt H(\A)}_{\wt P(\A)}[\eta]$.
Philosophically speaking, CAP representations typically look at
almost all places like a residual representation of $\wt H(\A)$,
although they might not be nearly equivalent to a residual
automorphic representation of $H(\A)$.\\\\ There are no CAP
representations of split $GL_n(\A)$ (\cite{moewalgln}), which is
reflected in the Ramanujan Conjecture: It says that a unitary
cuspidal automorphic representation of $GL_n(\A)$ has only tempered
components. (Compare this to Proposition \ref{tempgln} for
cohomological representations). Furthermore, recall that the naively
generalized Ramanujan Conjecture (nGRC) would claim this for groups
different from $GL_n$. This ``conjecture'' nGRC is obviously not
true: $\triv$ is even a CAP-representation of $GL_1'(\A)$, as it is
nearly equivalent to the residual representation $\triv_{GL_2(\A)}$
of $GL_2(\A)$. Although $G=GL_2'$ satisfies Strong Multiplicity One,
our Theorem \ref{thm:q=1} gives a nice non-trivial example of a CAP
representation and at the same time a counterexample to nGRC for
$G$:

\begin{cor}
Let $\pi$ be as constructed in the proof of Theorem
\ref{thm:nontempcusp}. Then $\pi$ is a cohomological
CAP-representation of $G(\A)=GL_2'(\A)$. It is also a counterexample
to the naively generalized Ramanujan Conjecture for the inner form
$G$ of $GL_4$.
\end{cor}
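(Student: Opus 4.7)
The plan is to unpack the construction of $\pi$ from the proof of Theorem \ref{thm:nontempcusp} and exhibit, at once, the parabolic data witnessing the CAP property and the non-tempered archimedean component violating nGRC. The quasi-split inner form of $G=GL_2'$ is $\wt G=GL_4/\Q$, and the natural candidate for the parabolic is the standard $\Q$-parabolic $\wt P\subset GL_4$ with Levi subgroup $\wt L\cong GL_2\times GL_2$. The cuspidal automorphic representation on the Levi that I would use is the one which already appeared in the construction of $\pi$, namely $\eta:=|\det|^{1/2}\rho\otimes|\det|^{-1/2}\rho$, where $\rho$ is the cuspidal automorphic representation of $GL_2(\A)$ constructed from the modular form $f$ in the proof of Theorem \ref{thm:nontempcusp}.

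By construction, the residual representation $\sigma$ of $GL_4(\A)$ is the unique irreducible quotient of $\textrm{Ind}^{GL_4(\A)}_{\wt P(\A)}[\eta]$ and is, in particular, an irreducible subquotient of this induced module. I would then use the fact that the global Jacquet--Langlands correspondence of \cite{ioan} which assigns $\pi\leftrightarrow\sigma$ is, at every place $v\notin S(D)$, simply the identification $G(\Q_v)\cong GL_4(\Q_v)$; concretely, at every split place we have $\pi_v\cong\sigma_v$. Since $S(D)$ is a \emph{finite} set of places, this gives $\pi_v\cong\sigma_v$ for all but finitely many $v$, which is precisely the condition of near equivalence of $\pi$ with an irreducible subquotient of $\textrm{Ind}^{\wt G(\A)}_{\wt P(\A)}[\eta]$. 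Hence $\pi$ is a CAP representation of $G(\A)$ with respect to the pair $(\wt P,\eta)$. That $\pi$ is cohomological is already contained in Theorem \ref{thm:nontempcusp}, so the first assertion of the corollary follows.

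For the counterexample to the naively generalized Ramanujan Conjecture, I would simply invoke that, by the very construction recalled at the end of the proof of Theorem \ref{thm:nontempcusp}, the archimedean component satisfies $\pi_\infty|_{G_s(\R)}=J(F_{w_2},1)=A_1(\lambda)$. By Proposition \ref{theoremcohunit} together with Remark \ref{rem:temp}, the only tempered member of the cohomological unitary dual of $G_s(\R)$ is $A_2(\lambda)$, so $A_1(\lambda)$ is non-tempered; this directly contradicts the naively generalized Ramanujan statement for $G$.

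I do not foresee any genuine obstacle in this proof: the construction of $\pi$ and $\sigma$ has been arranged so that every ingredient required by the definition of a CAP representation is already available. The one point to be careful about is the verification that ``locally equivalent at all but finitely many places'' really holds; this relies on the compatibility of the global Jacquet--Langlands correspondence of \cite{ioan} with the identification $G(\Q_v)\cong GL_4(\Q_v)$ at split places, which is a standard feature of that correspondence (and is implicit already in Theorem 18.1 and Proposition 18.2 of \cite{ioan}).
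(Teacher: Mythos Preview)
Your argument is correct and is essentially the paper's own proof spelled out in detail: the paper simply observes that $\pi$ is by construction nearly equivalent to the residual representation $\sigma$ of $GL_4(\A)$, and your expansion of this via the local compatibility of the Jacquet--Langlands transfer at split places is exactly what underlies that sentence. The only cosmetic point is that your choice $\eta=|\det|^{1/2}\rho\otimes|\det|^{-1/2}\rho$ is not literally unitary as the paper's definition of CAP demands; one should take $\eta=\rho\otimes\rho$ and absorb the twist into the induction parameter, though the paper's one-line proof glosses over this as well.
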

\begin{proof}
This is clear, since $\pi$ is by its very construction nearly
equivalent to the residual automorphic representation $\sigma$ of
$GL_4(\A)$.
\end{proof}

\subsection{Degrees $q=2,3$}

If the highest weight $\lambda$ of $E$ does not satisfy the equation
$\lambda=k\omega_2$, $k\geq 0$, but - in view of Proposition
\ref{theoremcohunit} - we still have $E\cong\check{E}$, we can use
Lefschetz numbers, resp. the twisted Arthur Trace Formula to show
that there is a cuspidal automorphic representation $\pi$ which has
non-vanishing cohomology with respect to $E$, i.e., whose
archimedean component is isomorphic to $A_2(\lambda)$. This approach
goes back to the paper \cite{borlabschw}, resp. work of
D. Barbasch and B. Speh, \cite{barsp}. We allow
ourselves to use freely the standard terminology and notation
concerning the trace formula in this section. For details, see
\cite{borlabschw} or \cite{barsp}.\\\\
Observe that since $E$ is assumed to be self-dual, we can find a
Cartan involution on $G$, agained denoted by $\theta$ which fixes
the highest weight $\lambda$ of $E$, i.e.,
$\lambda\circ\theta=\lambda$. Enlarge $G$ to the algebraic
$\Q$-group $G^+:=G\rtimes\<\theta\>$, where multiplication is
defined as
$(g_1,\theta^i)\cdot(g_2,\theta^j):=(g_1\theta(g_2),\theta^{i+j})$.
It is proved in \cite{barsp}, Thm. IX.4 or Prop. XI.1, that one can
find a certain function $f_\A=\otimes'_p f_p\in C^\infty_c(G^+(\A))$
which satisfies the conditions imposed by the simple trace formula
due to J.-P. Labesse and R. Kottwitz: At the archimedean place and
at two non-archimedean places $f_p$ is a local Lefschetz function,
defined as in \cite{borlabschw}, Prop. 8.4. At the remaining places
$f_p=char(K_p)$ the characteristic function of a certain compact
subgroup $K_p\subseteq G(\Q_p)$. Here, $K_p$ is maximal open compact
for almost all places. But for a finite number of non-archimedean
places $p$ the group $K_p$ has to be chosen with care, in order to
ensure that the geometric side of the trace formula is positive. For
details we refer the reader to \cite{barsp}. \\\\In essence, the
twisted trace formula applied to the special function $f_\A$ says that summing up the values
$$a^G(\gamma)J_G(\gamma,f_\A)=vol(G_\gamma(\Q)\R_+\backslash G_\gamma(\A))\cdot|G_\gamma/Z^\circ_{G}(\gamma)|
\cdot\int_{G^\circ_\gamma(\A)\backslash G(\A)}f_\A(g^{-1}\gamma g)dg,$$
the sum ranging over all conjugacy classes of elliptic elements
$\gamma\in G^+(\Q)$ equals summing up $m_{dis}(\pi)tr(\pi(f_\A))$,
the latter sum ranging over all representations $\pi$ in the
discrete spectrum of $G$. In our case this formula simplifies to

$$\sum_{\gamma\in G^+(\Q)_{ell}}a^G(\gamma)J_G(\gamma,f_\A)=
\sum_{\substack{\pi \textrm{ cuspidal with}\\ \textrm{infinitesimal
char.}=\lambda+\rho}}tr(\pi(f_\A)).$$ This is due to the following
three facts:
\begin{enumerate}
\item $\pi_\infty(f_\infty)=0$ if the infinitesimal character of
$\pi$ does not match the infinitesimal character $\lambda+\rho$ of
$E$ \item As $\dim E >0$, an automorphic representation $\pi$ of
$G(\A)$ appearing in the discrete spectrum with infinitesimal
character $\lambda+\rho$ cannot be a character, as $\pi_\infty$
cannot be one-dimensional. Whence, \cite{barsp} IX.3 Thm. (2) shows
that $\pi$ must already be cuspidal in order to contribute to the
harmonic side of the trace formula.
\item $m_{dis}(\pi)=1$ by the multiplicity one theorem for
discretely occurring automorphic representations of $G(\A)$, see
again \cite{ioan} Thm. 18.1.(b).
\end{enumerate}
Now, the main result of \cite{barsp}, cf. Thm. XI.1., states that
(having chosen $f_\A$ with enough care)
$$\sum_{\gamma\in G^+(\Q)_{ell}}a^G(\gamma)J_G(\gamma,f_\A)>0,$$
so there must exist a cuspidal automorphic representation $\pi$ of
$G(\A)$ which has non-trivial cohomology with respect to $E$. By our
Proposition \ref{theoremcohunit}, it must have archimedean component
$\pi_\infty=A_2(\lambda)$. Summarizing this gives

\begin{thm}\label{thm:q=2}
For all self-dual $E$ with highest weight $\lambda\neq k\omega_2$,
$k\geq 0$, there is a cuspidal automorphic representation $\pi$ of
$G(\A)$ which has cohomology in degrees $q=2,3$. In particular,
$A_2(\lambda)$ appears as the archimedean component of a global
cuspidal representation.
\end{thm}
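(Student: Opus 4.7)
My plan is to apply the twisted version of Arthur's trace formula, in the form used by Borel-Labesse-Schwermer \cite{borlabschw} and Barbasch-Speh \cite{barsp}. Since $E\cong\check{E}$, we may fix a Cartan involution $\theta$ of $G$ with $\lambda\circ\theta=\lambda$ and form the algebraic $\Q$-group $G^+:=G\rtimes\<\theta\>$ with multiplication $(g_1,\theta^i)(g_2,\theta^j):=(g_1\theta(g_2),\theta^{i+j})$, on which the twisted simple trace formula will be evaluated.

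The first technical step is to choose a test function $f_\A=\otimes'_p f_p\in C_c^\infty(G^+(\A))$ satisfying the hypotheses of the Kottwitz-Labesse simple trace formula: at $\infty$ and at two auxiliary non-archimedean places $f_p$ is taken to be a local Lefschetz function as constructed in \cite{borlabschw} Prop.~8.4; at almost all remaining finite places $f_p$ is the characteristic function of a hyperspecial maximal open compact subgroup $K_p$; at a finite collection of bad non-archimedean places the $K_p$ must be selected with particular care, following the recipe of \cite{barsp}. The spectral side then collapses to
$$\sum_{\substack{\pi\textrm{ cuspidal}\\ \textrm{inf.~char.}(\pi)=\lambda+\rho}} tr(\pi(f_\A)),$$
by combining three inputs: (i) the archimedean Lefschetz function annihilates any representation of the wrong infinitesimal character; (ii) $\dim E>0$ rules out one-dimensional discrete constituents, so by \cite{barsp} IX.3 Thm.~(2) only genuinely cuspidal representations contribute; and (iii) strong multiplicity one for the discrete spectrum of $G$, \cite{ioan} Thm.~18.1(b), gives $m_{dis}(\pi)=1$.

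The main obstacle, and the heart of the argument, is the positivity of the geometric side. We plan to invoke \cite{barsp} Thm.~XI.1 as a black box, which after the careful choice of bad-place data yields
$$\sum_{\gamma\in G^+(\Q)_{ell}} a^G(\gamma)J_G(\gamma,f_\A)>0.$$
Comparing the two sides then forces the existence of at least one cuspidal $\pi$ of $G(\A)$ with infinitesimal character $\lambda+\rho$ for which $tr(\pi(f_\A))\neq 0$, and any such $\pi$ has non-vanishing $(\g_s,K)$-cohomology against $E$. It remains to identify the archimedean component. The self-duality hypothesis amounts to $c_1=c_3$, so $j(\lambda)\leq 2$ in the notation of Proposition \ref{theoremcohunit}; the hypothesis $\lambda\neq k\om_2$ rules out both $A_0(\lambda)=\triv$ (which is impossible in the cuspidal spectrum anyway) and $A_1(\lambda)$. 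The only remaining possibility is $\pi_\infty=A_2(\lambda)$, and by Proposition \ref{theoremcohunit} this representation has non-trivial $(\g_s,K)$-cohomology against $E$ exactly in degrees $q=2$ and $q=3$, which is the claim.
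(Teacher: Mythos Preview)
Your proposal is correct and follows essentially the same approach as the paper: both arguments invoke the twisted simple trace formula via \cite{borlabschw} and \cite{barsp}, build the group $G^+=G\rtimes\langle\theta\rangle$, choose the test function $f_\A$ with local Lefschetz functions at $\infty$ and two finite places, reduce the spectral side to a sum over cuspidal $\pi$ of the correct infinitesimal character using the same three inputs, and then appeal to \cite{barsp} Thm.~XI.1 for the positivity of the geometric side before identifying $\pi_\infty=A_2(\lambda)$ via Proposition~\ref{theoremcohunit}. The only cosmetic difference is that you spell out explicitly why $A_0(\lambda)$ and $A_1(\lambda)$ are excluded (namely $j(\lambda)=2$ under the hypotheses), whereas the paper simply cites Proposition~\ref{theoremcohunit}.
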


\subsection{}
Combining Theorems \ref{thm:nontempcusp} and \ref{thm:q=2} we have proved

\begin{thm}
For all finite-dimensional, self-dual, irreducible complex representations $E$ of $G(\R)$
$$H_{cusp}^*(G,E)\neq 0$$
\end{thm}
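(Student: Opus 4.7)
The plan is essentially to combine the two preceding existence theorems via a case distinction on the highest weight $\lambda=\sum_{i=1}^3 c_i\alpha_i$ of $E$. The decisive observation is that self-duality of $E$ is equivalent to the condition $c_1=c_3$, as recorded in Remark \ref{rem:temp}. So, given any finite-dimensional, self-dual, irreducible complex representation $E$ of $G(\R)$, I only have to split the set of admissible highest weights $\lambda$ with $c_1=c_3$ into the two classes that Theorems \ref{thm:nontempcusp} and \ref{thm:q=2} treat.

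First I would isolate the case $\lambda=k\omega_2$, $k\geq 0$. Recalling from section \ref{sec:1} that $\omega_2=\tfrac12\alpha_1+\alpha_2+\tfrac12\alpha_3$, these weights satisfy $c_1=c_3=k/2$, so in particular they are self-dual. For such $\lambda$, Theorem \ref{thm:nontempcusp} directly produces a unitary cuspidal automorphic representation $\pi$ of $G(\A)$ with $\pi_\infty|_{G_s(\R)}=A_1(\lambda)=J(F_{w_2},1)$, and by Proposition \ref{theoremcohunit} this contributes nontrivially to $H^q_{cusp}(G,E)$ in degrees $q=1$ and $q=4$. Hence $H^*_{cusp}(G,E)\neq 0$ in this case.

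In the remaining case one has $c_1=c_3$ but $\lambda\neq k\omega_2$, and I would invoke Theorem \ref{thm:q=2}: it furnishes a cuspidal automorphic representation $\pi$ of $G(\A)$ with archimedean component $A_2(\lambda)$, contributing nontrivially to $H^q_{cusp}(G,E)$ in degrees $q=2,3$. Since every self-dual $E$ falls into exactly one of the two cases, the union of the statements gives $H^*_{cusp}(G,E)\neq 0$ unconditionally on self-dual $E$.

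There is no real obstacle here; the entire content of the theorem is bookkeeping, and all of the substantive analytic work -- the global Jacquet-Langlands construction of non-tempered CAP classes in degrees $1,4$ and the twisted trace formula argument of Barbasch-Speh / Borel-Labesse-Schwermer producing tempered cuspidal classes in degrees $2,3$ -- is already packaged into Theorems \ref{thm:nontempcusp} and \ref{thm:q=2}. The only step that requires any care is verifying that the case distinction $\{\lambda=k\omega_2\}\cup\{\lambda\neq k\omega_2,\ c_1=c_3\}$ exhausts all self-dual highest weights, which is immediate from the formula for $\omega_2$ and the criterion $E\cong\check E\Leftrightarrow c_1=c_3$.
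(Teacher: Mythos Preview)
Your proposal is correct and follows exactly the paper's approach: the paper's proof consists of the single sentence ``Combining Theorems \ref{thm:nontempcusp} and \ref{thm:q=2} we have proved'', and your case distinction on $\lambda=k\omega_2$ versus $\lambda\neq k\omega_2$ with $c_1=c_3$ is precisely how those two theorems partition the self-dual weights.
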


\section*{Acknowledgements}
I am deeply grateful to Ioan Badulescu and Colette M\oe glin, who
helped me a lot in finding the proof of Theorem
\ref{thm:nontempcusp}.

\end{document}